\providecommand{\U}[1]{\protect\rule{.1in}{.1in}}
\providecommand{\U}[1]{\protect\rule{.1in}{.1in}}
\newtheorem{theorem}{Theorem}[section]
\newtheorem{acknowledgement}[theorem]{Acknowledgement}
\newtheorem{corollary}[theorem]{Corollary}
\newtheorem{definition}[theorem]{Definition}
\newtheorem{example}[theorem]{Example}
\newtheorem{lemma}[theorem]{Lemma}
\newtheorem{proposition}[theorem]{Proposition}
\newtheorem{remark}[theorem]{Remark}
\newenvironment{proof}[1][Proof]{\noindent\textbf{#1.} }{\ \rule{0.5em}{0.5em}}
\begin{document}

\title{Tail estimates for Markovian rough paths }
\author{Thomas Cass and Marcel Ogrodnik}
\maketitle

\begin{abstract}
We work in the context of Markovian rough paths associated to a class of
uniformly subelliptic Dirichlet forms (\cite{FrizVictoir2008}) and prove a
better-than-exponential tail estimate for the accumulated local $p$-variation
functional, which has been introduced and studied in \cite{CLL2013}. We
comment on the significance of these estimates to a range of currently-studied
problems, including the recent results of Ni Hao \cite{Hao2014}, and Chevyrev
and Lyons \cite{ChevyrevLyons2014}.

\end{abstract}

\parindent 0pt

\section{Introduction}

Lyons's rough path theory has allowed a pathwise interpretation to be given to
stochastic differential equations of the form%

\[
dY_{t}=V\left(  Y_{t}\right)  dX_{t},\text{ }Y_{0}=y_{0},
\]
where the vector fields $V=\left(  V^{1},...,V^{d}\right)  $ are driven along
an $%
%TCIMACRO{\U{211d} }%
%BeginExpansion
\mathbb{R}
%EndExpansion
^{d}$-valued rough random signal $X$. An important feature of Lyons' approach
-- as compared, say, to the classical framework of It\^{o} -- is the
relaxation of the condition that $X$ be a semimartingale. There is typically
no way of accommodating this feature within It\^{o}'s or any comparable
theory. Furthermore there are fundamental classes of random signals where the
semimartingale property is either absent, or only present in special cases,
e.g. Markov processes, fractional Brownian motions and, more broadly, the
family of Gaussian processes. Study of the Gaussian-driven RDEs using rough
path analysis has been especially prolific over recent years, we reference
\cite{FrizVictoir2010a}, \cite{CassFriz2010}, \cite{HairerPillai2013},
\cite{FrizRiedel2014}, \cite{Inahama2014}, \cite{Riedel2014} and
\cite{BaudoinOuyang2014} as an illustrative, although by no means exhaustive,
list of applications.

\qquad Semimartingales have a well-defined quadratic variation process. It is
widely appreciated, at least for continuous semimartingales, that control of
the quadratic variation provides insight on the moments, tails and deviations
of the semimartingale itself. The exponential martingale inequality (see,
e.g., \cite{RogersWilliams2000a}) and the Burkholder-Davis-Gundy inequalities
(see, e.g., \cite{Burkholder1966}) are prime examples of this principle in
practice. The latter result in particular, allows one to control the moments
of linear differential equations%
\[
dY_{t}=AY_{t}dX_{t},Y_{0}=y_{0},
\]
where $A$ is in Hom$\left(
%TCIMACRO{\U{211d} }%
%BeginExpansion
\mathbb{R}
%EndExpansion
^{d},%
%TCIMACRO{\U{211d} }%
%BeginExpansion
\mathbb{R}
%EndExpansion
^{e}\right)  $ and $X$ is a semimartingale. A more sophisticated example to
which this idea applies is the case when $Y$ is the derivative of the flow of
an SDE, which is well known to solve an SDE with linear growth vector fields.
In many applications, such as Malliavin calculus, it is crucial to show that
this derivative process (and its inverse) has finite moments of all orders.

\qquad In rough path theory, by contrast, one deliberately postpones using
probabilistic features of $X$. Indeed, a key advantage is the separation
between the deterministic theory, which is used to solve the differential
equation, and the probability, which is used to enhance the driving path to a
rough path. This separation however can -- and often, does -- introduce
complications in probabilistic applications. For instance, in trying to prove
moment estimates of the type discussed in the last paragraph using a rough
path approach, it is reasonable to try to integrate the natural growth
estimate for the solution, which in this case has the form (see
\cite{FrizVictoir2010})%

\begin{equation}
\left\vert \left\vert \mathbf{Y}\right\vert \right\vert _{p\text{-}var,\left[
0,T\right]  }\leq C\exp\left(  C\left\vert \left\vert \mathbf{X}\right\vert
\right\vert _{p\text{-}var,\left[  0,T\right]  }^{p}\right)  ,
\label{crude bound}%
\end{equation}

where $\left\vert \left\vert \mathbf{X}\right\vert \right\vert _{p\text{-}%
var,\left[  0,T\right]  }$ denotes $p-$variation of the rough path enhancement
of $X$. In the case when $X$ is the Gaussian (even Brownian) rough path, this
inequality is useless for proving moment estimates because the right-hand side
is not integrable; $\left\vert \left\vert \mathbf{X}\right\vert \right\vert
_{p\text{-}var,\left[  0,T\right]  }$ has only Gaussian tail. Nevertheless, it
is possible to surmount this problem, as demonstrated by \cite{CLL2013}. The
key idea is to use a slight sharpening of the estimate (\ref{crude bound}) to
one of the form (see \cite{CLL2013})%

\begin{equation}
\left\vert \left\vert \mathbf{Y}\right\vert \right\vert _{p\text{-}var,\left[
0,T\right]  }\leq C\exp\left(  \sup_{\overset{D=(t_{i})}{\left\vert \left\vert
\mathbf{X}\right\vert \right\vert _{p\text{-}var,\left[  t_{i},t_{i+1}\right]
}\leq1}}\sum_{i:t_{i}\in D}\left\vert \left\vert \mathbf{X}\right\vert
\right\vert _{p\text{-}var,\left[  t_{i},t_{i+1}\right]  }^{p}\right)
:=C\exp\left[  CM(\mathbf{X},[0,T])\right]  . \label{CLL}%
\end{equation}

The functional $M$ is called the accumulated local $p$-variation. While it may
not appear on first inspection that this estimate helps much, in fact it
considerably improves the tail analysis mentioned above. The main result of
\cite{CLL2013} is the following tail estimate for Gaussian rough paths
$\mathbf{X}$:%

\begin{equation}
\mathbb{P}\left(  M(\mathbf{X},[0,T])>x\right)  \leq\exp\left(  -cx^{2/q}%
\right)  , \label{estimate}%
\end{equation}

where $q\in\lbrack1,2)$ is a parameter related to the Cameron-Martin Hilbert
space of $X$. It follows as a consequence that the left-hand side of
(\ref{CLL}) has moments of all orders.

\qquad The strategy for proving the estimate (\ref{estimate}) in the Gaussian
setting is somewhat subtle. The first step is to introduce the so-called
$p$-variation greedy partition by setting%

\[
\tau_{0}=0\text{, and }\tau_{n+1}=\inf\left\{  t\geq\tau_{n}:\left\vert
\left\vert \mathbf{x}\right\vert \right\vert _{p\text{-}var,\left[  \tau
_{n},t\right]  }=1\right\}  \wedge T.
\]

An integer-valued random variable defined by%

\begin{equation}
N_{p\text{-var}}(\mathbf{x},[0,T])=\sup\left\{  n\in\mathbb{N}\cup\left\{
0\right\}  :\tau_{n}<T\right\}  \label{n pvar}%
\end{equation}

then counts the number of distinct intervals in the partition $\left(
\tau_{n}\right)  _{n=0}^{\infty}$. Second, a relatively simple argument gives%

\[
N_{p\text{-var}}(\mathbf{x},[0,T])\leq M(\mathbf{x},[0,T])\leq2N_{p\text{-var}%
}(\mathbf{x},[0,T])+1,
\]

and hence the tail of the random variable $M(\mathbf{X},[0,T])$ can be deduced
from that of $N_{p\text{-var}}(\mathbf{X},[0,T])$. Third, the estimate
(\ref{estimate}) is proved for $N_{p\text{-var}}(\mathbf{X},[0,T])$ in place
of $M(\mathbf{X},[0,T]);$ the two key tools in doing this are (Borell's)
Gaussian isoperimetric inequality (see, e.g., \cite{Borell1975},
\cite{BakryLedoux1996}), and the Cameron-Martin embedding theorem of
\cite{FrizVictoir2010}.

\qquad In this paper we study this problem for a different class of rough
paths: the Markovian rough paths. Rough paths which are themselves Markov, or
which are the lifts of such processes, have been studied previously. In
\cite{BassHamblyLyons2002}, for example, the authors start with an a
reversible $%
%TCIMACRO{\U{211d} }%
%BeginExpansion
\mathbb{R}
%EndExpansion
^{d}$-valued continuous Markov process $X$ having a stationary probability
measure $\mu.$ By assuming a moment condition on the increments of $X$ and by
starting $X$ \ in its stationary distribution, they construct a L\'{e}vy-area
process as a limit of dyadic piecewise linear approximations to $X$. The
argument uses a forward-backward martingale decomposition, in the spirit of
\cite{LyonsZhang1994}, which is applied to a natural sequence of
approximations to the area. The reversibility of $X$ and the anti-symmetry of
the L\'{e}vy-area are used in an attractive way to realise suitable
cancellations in this approximating sequence. Earlier work by Lyons and Stoica
(see \cite{LyonsStoica1999}) has also exploited the forward-backward
martingale decomposition in the construction of the L\'{e}vy-area. An
alternative approach, which we will follow most closely in our presentation,
was proposed in \cite{FrizVictoir2008} and \cite{FrizVictoir2010}. Here
$\mathbf{X=}\left(  X,A\right)  $ is constructed not by \textit{enhancing} $X$
as in the initially mentioned approach, but directly as the Markov process
associated with (the Friedrich's extension of) a Dirichlet form (see Section
\ref{markov} for a review of this idea).

\qquad There are big obstacles to implementing the Gaussian approach of
\cite{CLL2013} in this setting. The most important is the lack of a usable
substitute for the isoperimetric inequality and, relatedly, the Cameron-Martin
embedding theorem (indeed, there is no longer any Cameron-Martin space!).
Analogous results which exist in the literature (e.g. \cite{BakryLedoux1996},
\cite{CapitaineHsuLedoux1997}) do not seem easy to implement here. As a
consequence we have to re-think the whole strategy upon which \cite{CLL2013}
is founded. In so doing we gain important insights into the general principles
for proving estimates of the type (\ref{estimate}). In summary, these are:

\begin{enumerate}
\item That it can be useful to determine the greedy partition $\left(
\sigma_{n}\right)  _{n=0}^{\infty}$ from a metric topology which is weaker
than the $p$-variation rough path topology. Let $d$ denote the metric, and
$N_{d}(\mathbf{x},[0,T])$ the integer corresponding to the greedy partition
under this metric. Then, clearly, $N_{d}(\mathbf{x},[0,T])\leq N_{p\text{-var}%
}(\mathbf{x},[0,T]).$ This has the immediate advantage of making the proof of
the tail estimate for $N_{d}(\mathbf{X},[0,T])$ easier to prove than for
$N_{p\text{-var}}(\mathbf{X},[0,T])$. The price one pays is that it is no
longer true that%
\[
\left\vert \left\vert \mathbf{X}\right\vert \right\vert _{p\text{-}var,\left[
\sigma_{n},\sigma_{n+1}\right]  }\leq1\text{ for all }n=0,1,2...
\]
Nevertheless, the control of $\mathbf{X}$ in some topology -- even a weaker
one than $p$-variation---is often sufficient to dramatically improve the tail
behaviour of the random variable $\left\vert \left\vert \mathbf{X}\right\vert
\right\vert _{p\text{-}var,\left[  \sigma_{n},\sigma_{n+1}\right]  }$. Similar
observations to this have been made before in other contexts, e.g.
\cite{LyonsZeitouni1999} and in support theorems, \cite{Lyons2004},
\cite{BenArousGradinaruLedoux1994}, \cite{FrizLyonsStroock2006}.

\item A natural choice of metric in the Markovian regime of this paper is the
supremum-metric for rough paths, and this is likely to be so for other classes
of random rough paths, too. In the present setting, we can control the tails
on $N$ using a combination of large deviations estimates, Gaussian heat kernel
bounds and exponential Tauberian theorems. For other examples, a different way
of obtaining these bounds will be needed. But the study of tail estimates for
the maximum of a stochastic process is a much more widely addressed subject
than the corresponding study for $p$-variation, see e.g. \cite{Talagrand2014}.
There are likely to be many more examples which can be approached by adapting
these methods.
\end{enumerate}

\qquad\ We have already mentioned some applications. Without giving an
exhaustive list, or trying to anticipate all future uses of this work, we
briefly summarise what we believe will be the most immediately obvious sources
of impact. The chief application of \cite{CLL2013} has been in Gaussian
H\"{o}rmander theory to prove, for example, smoothness and other properties of
the density for Gaussian RDEs (see, e.g., \cite{HairerPillai2013},
\cite{Baudoin2013}, \cite{Baudoin2013a}, \cite{Baudoin2014a},
\cite{CassHairerLittererTindel2014}). A similar approach might be attempted
with Markovian signals, but one has to be careful -- unlike in the Gaussian
setting, the driving Markov process will no longer have a smooth density in
general. Nevertheless it is interesting to consider whether the It\^{o} map
preserves the density (and its derivatives -- if it has any) under
H\"{o}rmander's condition. Here the Malliavin method will radically break
down; abstract Wiener analysis will need to be replaced by analysis of the
Dirichlet form. \ 

\qquad Second, growth estimates involving the accumulated $p$-variation occur
naturally and generically in rough path theory; see \cite{FrizRiedel2013} for
a range of examples. We therefore expect uses of our results to be widespread.
In \cite{Cass2013} it was observed that $M(\mathbf{X},[0,T])$ appears in
optimal Lipschitz-estimates on the rough path distance between two different
RDE solutions. This has uses in fixed-point arguments, e.g. in studying
interacting McKean-Vlasov-type RDEs.

\qquad Another illustration of the use of our result can be found in very
interesting recent papers \cite{Hao2014} and \cite{ChevyrevLyons2014}. In
these papers the authors prove criteria for the law of a geometric rough path
to be determined by its expected signature. These criteria are formulated in
terms of the power series
\begin{equation}
\sum_{n=1}^{\infty}\lambda^{n}\left\vert \mathbb{E}\left[  \mathbf{X}%
_{0,T}^{n}\right]  \right\vert _{n}, \label{series}%
\end{equation}
where $S(\mathbf{X})_{0,T}=\sum_{k=0}^{\infty}\mathbf{X}_{0,T}^{k}$ denotes
the signature of a geometric rough path $\mathbf{X,}$ and $\left\vert
\mathbb{\cdot}\right\vert _{n}$ is a suitable norm on $\left(
%TCIMACRO{\U{211d} }%
%BeginExpansion
\mathbb{R}
%EndExpansion
^{d}\right)  ^{\otimes n}$. An important result in \cite{ChevyrevLyons2014} is
that the radius of convergence of (\ref{series}) being infinite is sufficient
for $\mathbb{E}[S(\mathbf{X})_{0,T}]$ to determine the law of $\mathbf{X}$
uniquely over $[0,T]$. \ The work of Ni Hao \cite{Hao2014} and Friz and Riedel
\cite{FrizRiedel2012} complements this result by proving an upper bound on the
signature $S(\mathbf{X})_{0,T}$ in terms of $N_{p\text{-var}}(\mathbf{X}%
,[0,T])$. In \cite{Hao2014} these estimates are then used to show that if
$N_{p\text{-var}}(\mathbf{X},[0,T])$ \ has a Gaussian tail then the radius of
convergence of the series (\ref{series}) is infinite. In
\cite{ChevyrevLyons2014}, this statement is refined to show that any
better-than-exponential tail of $N_{p\text{-var}}(\mathbf{X},[0,T])$ suffices
for the same conclusion, and that a somewhat weaker determination of the law
of $\mathbf{X}$\ is possible when the tail is only exponential. One example
cited in \cite{ChevyrevLyons2014} is the class of Markovian rough paths
stopped on leaving a domain (the domain is required to have some boundedness
properties in order for it to have a well-defined diameter). For this class
they are able to show exponential integrability of $N_{p\text{-var}%
}(\mathbf{X},[0,T])$. Our main result, Theorem \ref{main thm}, imposes no
restriction on the domain of $\mathbf{X}$ and we prove a stronger
tail-estimate; more exactly, we prove one which is better-than-exponential in the
sense that%

\[
\mathbb{P}\left(  M\left(  \mathbf{X};[0,T]\right)  >R\right)  \leq
C\exp\left(  -CR^{2(1-1/p)}\right)  \text{ for any }p>2,\text{ where }C=C_{p}.
\]
This is obviously better than just exponential decay, and it has consequence
one may verify the stronger criterion mentioned in the above work.\ One
immediate application of our results therefore is to broaden substantially the
range of examples to which the results of Chevyrev-Lyons-Ni Hao
\cite{ChevyrevLyons2014}, \cite{Hao2014} are known to apply.

\qquad The outline of the article is as follows. In Section \ref{rough path},
we give a general overview of the results of rough path theory required for
our analysis. In Section \ref{markov} we review the theory and key results for
Markovian rough paths. In Section \ref{ld} we use large deviations techniques
and exponential Tauberian theorems to prove that under the supremum metric the
integer associated to the greedy partition of a Markovian rough path has a
Gaussian tail. The main work is done in Section \ref{main}, where we prove a
crucial bound on the accumulated local $p$-variation in terms of the
aforementioned integer of the greedy partition and the accumulated
$p$-variation of the Markovian rough path between the points of this
partition. This result, in concert with heat kernel estimates and the results
of section \ref{markov}, allows us to prove our main theorem, i.e., the
accumulated local $p$-variation of a Markovian rough path has better-than-exponential tails.

\begin{acknowledgement}
The research of the first-named author is supported by EPSRC grant
EP/M00516X/1. The first-named author is also grateful to Bruce Driver for
conversations related to this work.
\end{acknowledgement}

\section{Rough paths\label{rough path}}

There are now many texts which outline the core content of rough path theory
(e.g., \cite{Lyons1998}, \cite{Lyons2007}, \cite{FrizVictoir2010} and
\cite{FrizHairer2014}), here we focus on gathering together relevant notation.

To start with, assume $V$ is a $d$-dimensional real vector space. Then a basic
role in the theory is played by the truncated tensor algebra which for $N\in%
%TCIMACRO{\U{2115} }%
%BeginExpansion
\mathbb{N}
%EndExpansion
$ is the set%

\[
T^{N}\left(  V\right)  :=\left\{  g=\left(  g^{0},g^{1},...,g^{N}\right)
:g^{k}\in V^{\otimes k},k=0,1,...,N\right\}
\]

equipped with the truncated tensor product. Two subsets of $T^{N}\left(
V\right)  $ of particular interest are%

\[
\tilde{T}:=\tilde{T}^{N}\left(  V\right)  :=\left\{  h\in T^{N}\left(
V\right)  :g^{0}=1\right\}  \text{ and }\mathfrak{\tilde{t}:=\tilde{t}}%
^{N}(V):=\left\{  A\in T^{N}\left(  V\right)  :A^{0}=0\right\}  .
\]

It is easy to see that $\tilde{T}$ is a group under truncated tensor
multiplication. In fact it is a Lie group and the vector space
$\mathfrak{\tilde{t}}$ \ is its Lie algebra Lie$\left(  \tilde{T}\right)  $,
i.e. $\mathfrak{\tilde{t}}$ is tangent space to $\tilde{T}$ at the group
identity $1$. The diffeomorphisms $\log:\tilde{T}\rightarrow$
$\mathfrak{\tilde{t}}$ $\ $\ and $\exp:\mathfrak{\tilde{t}}\rightarrow
\tilde{T}$ defined respectively by the power series
\[
\log\left(  g\right)  =\sum_{k=1}^{N}\frac{\left(  -1\right)  ^{k-1}}%
{k}\left(  g-1\right)  ^{k}\text{ and }\exp\left(  A\right)  =\sum_{k=0}%
^{N}\frac{1}{k!}A^{k}%
\]
are mutually inverse, and $\log$ defines a global chart on $\tilde{T}.$ The
map $\exp$ coincides with the Lie group exponential, i.e. for every $A,$
$\exp\left(  A\right)  =\gamma_{A}\left(  1\right)  $ where $\gamma_{A}:%
%TCIMACRO{\U{211d} }%
%BeginExpansion
\mathbb{R}
%EndExpansion
\rightarrow$ $\tilde{T}$ is the unique integral curve through the identity of
the left-invariant vector field associated with $A.$

\qquad In the paper it will be useful to realise the group structure of
$\tilde{T}$ on the set $\mathfrak{\tilde{t}}$ $\mathfrak{.}$ To do this we
define a product $\ast:$ $\mathfrak{\tilde{t}}$ $\mathfrak{\times\tilde
{t}\rightarrow\tilde{t}}$ using the functions $\exp$ and $\log$ as follows%
\[
A\ast B:=\log\left(  \exp\left(  A\right)  \exp\left(  B\right)  \right)
\text{ for all }A,B\in\mathfrak{\tilde{t}.}%
\]
Under this definition $\left(  \mathfrak{\tilde{t},\ast}\right)  $ is again a
Lie group with identity element $0,$ and $\exp$ is then a Lie group
isomorphism from $\left(  \mathfrak{\tilde{t},\ast}\right)  $ to $\tilde{T}.$
The differential of $\exp$ at $0$ then pushes forward tangent vectors in
$T_{0}\mathfrak{\tilde{t}}$ to elements of the vector space $\mathfrak{\tilde
{t}.}$ This linear isomorphism is easily seen to be the identity map on
$\mathfrak{\tilde{t},}$ hence Lie$\left(  \mathfrak{\tilde{t},\ast}\right)
=\mathfrak{\tilde{t}}$ as a vector space. The Lie group exponential map
Lie$\left(  \mathfrak{\tilde{t},\ast}\right)  \rightarrow\left(
\mathfrak{\tilde{t},\ast}\right)  $ \ also equals the identity map on
$\mathfrak{\tilde{t}}$, and the Campbell-Baker-Hausdorff formula (see
\cite{Dynkin1947}, \cite{Strichartz1987}) can be used to show that the Lie
bracket induced by $\left(  \mathfrak{\tilde{t},\ast}\right)  $ agrees with
$AB-BA,$ the commutator Lie bracket derived from the original truncated tensor multiplication.

\qquad We let $\mathfrak{g}^{N}\mathfrak{:=g=}$Lie$\left(  V\right)  $ be the
Lie algebra generated by $V.$\ The vector space $\mathfrak{g}$ is an embedded
submanifold of $\ \mathfrak{\tilde{t}}$ and is also a subgroup of $\left(
\mathfrak{\tilde{t},\ast}\right)  $ under the product $\mathfrak{\ast.}$ It
follows that $\left(  \mathfrak{g,\ast}\right)  $ is a Lie group, which we
call the step-$N$ nilpotent Lie group with $d$ generators. The Lie algebra
associated with $\left(  \mathfrak{g,\ast}\right)  $ is the vector space
$\mathfrak{g.}$

\begin{definition}
For any $a\in V$ we define $B_{a}$ to be the unique left-invariant vector
field on $\left(  \mathfrak{g,\ast}\right)  $ associated with $\left(
0,a,0,...,0\right)  \in\mathfrak{g}.$ Given $A\in\mathfrak{g}$ we then define
the horizontal subspace $\mathcal{H}_{A}$ at $A\in\mathfrak{g}$ to be the
vector subspace of $\mathfrak{g}$ given by
\[
\mathcal{H}_{A}=span\left\{  B_{a}\left(  A\right)  :a\in V\right\}  .
\]
An absolutely continuous curve $\gamma:\left[  0,T\right]  \rightarrow
\mathfrak{g}$ is then said to be horizontal if $\dot{\gamma}\left(  t\right)
\in\mathcal{H}_{\gamma\left(  t\right)  }$ for almost every $t\in\left[
0,T\right]  .$
\end{definition}

\begin{remark}
For example when $N=2$ a simple calculation shows that%
\[
B_{a}\left(  A\right)  =a+\frac{1}{2}\left[  A^{1},a\right]  ,\text{ where
}A=\left(  A^{1},A^{2}\right)  .
\]

\end{remark}

We will equip $V$ with a norm and consider paths $x\ $belonging to
$C^{1\text{-var}}\left(  [0,T],V\right)  ,$ the space of continuous $V$-valued
paths of finite 1-variation $\Vert\mathbf{x}\Vert_{1-\text{var};\left[
0,T\right]  }$. The truncated signature $S_{N}(x)$ of $x$ is defined by
\[
S_{N}(x)_{0,\cdot}:=1+\sum_{k=1}^{N}\int_{0<t_{1}<....<t_{k}<\cdot}dx_{t_{1}%
}\otimes...\otimes dx_{t_{k}}=:1+\sum_{k=1}^{N}\mathbf{x}_{0,\cdot}^{k}%
\in\tilde{T}^{N}\left(  V\right)  .
\]
It is well-known (see \cite{FrizVictoir2010}) that $\log S_{N}(x)_{0,\cdot}$
is a path which takes values in the group $\left(  \mathfrak{g,\ast}\right)
.$ Any horizontal curve starting from the $0,$ the identity in\ $\left(
\mathfrak{g,\ast}\right)  ,$ can be realised as the unique solution to%
\[
d\gamma_{t}=B_{dx_{t}}\left(  \gamma_{t}\right)  ,\text{ }\gamma_{0}=0.
\]
This so-called horizontal lift of $x$ is easily shown to equal $S_{N}%
(x)_{0,\cdot}$.

\qquad A classical theorem of Chow (see, e.g., \cite{Gromov1996},
\cite{Montgomery2002}) shows that any distinct points in $\mathfrak{g}$ can be
connected by a horizontal curve (which is smooth in the case $N=2)$. This
gives rise to the Carnot-Carath\'{e}odory norm on $\left(  \mathfrak{g,\ast
}\right)  $ as the associated geodesic distance
\begin{equation}
\Vert g\Vert_{CC}:=\inf\left\{  \Vert\mathbf{x}\Vert_{1-\text{var};\left[
0,T\right]  }:x\in C^{1\text{-var}}\left(  [0,T],V\right)  \text{ and }%
S_{N}\left(  x\right)  _{0,T}=g\right\}  . \label{cc norm}%
\end{equation}
The function $\Vert\cdot\Vert_{CC}$ has the property of being a
\textit{homogeneous norm on }$\left(  \mathfrak{g,\ast}\right)  $. By this we
mean a map $\Vert\cdot\Vert:\left(  \mathfrak{g,\ast}\right)  \rightarrow
\mathbb{R}_{\geq0}$ which vanishes at the identity and is homogeneous in the
sense that
\[
\Vert\delta_{r}g\Vert=\left\vert r\right\vert \Vert g\Vert\text{ for every
}r\in%
%TCIMACRO{\U{211d} }%
%BeginExpansion
\mathbb{R}
%EndExpansion
,
\]
wherein $\delta_{r}:\mathfrak{g\rightarrow g}$ is the restriction to
$\mathfrak{g}$ of the scaling operator $\delta_{r}:\tilde{T}^{N}\left(
V\right)  \mathfrak{\rightarrow}\tilde{T}^{N}\left(  V\right)  $ defined by%
\[
\delta_{r}:\left(  1,g^{1},g^{2},...,g^{N}\right)  \rightarrow\left(
1,rg^{1},r^{2}g^{2},...,r^{N}g^{N}\right)  .
\]
In finite dimensions it is a basic fact (\cite{FrizVictoir2010}) that all such
homogeneous norms are Lipschitz equivalent, and the subset of symmetric and
subadditive homogeneous norms gives rise to metrics on $\left(
\mathfrak{g,\ast}\right)  $. The one which we will use most often is the
left-invariant Carnot-Carath\'{e}odory metric $d_{CC}$ determined from
(\ref{cc norm}) by%

\[
d_{CC}(g,h)=\Vert g^{-1}\ast h\Vert_{CC},\quad g,h\in\mathfrak{g}.
\]

\qquad For any path $\mathbf{x}:[0,T]\rightarrow\left(  \mathfrak{g,\ast
}\right)  $ the group structure provides us with a natural notion of increment
given by $\mathbf{x}_{s,t}:=\mathbf{x}_{s}^{-1}\ast\mathbf{x}_{t}.$\ For each
$\alpha$ in $(0,1]$ and $p$ in $[1,\infty)$ we can then let $C^{\alpha
-\text{H\"{o}l}}\left(  [0,T],\mathfrak{g}\right)  $ and $C^{p-\text{var}%
}\left(  [0,T],\mathfrak{g}\right)  $ be the subsets of the continuous
$\mathfrak{g}$-valued paths such that the following, respectively, are finite
real numbers%

\begin{align}
\Vert\mathbf{x}\Vert_{\alpha-\text{H\"{o}l};[0,T]}  &  :=\sup
_{\substack{\lbrack s,t]\subseteq\lbrack0,T], \\s\neq t}}\frac{\Vert
\mathbf{x}_{s,t}\Vert_{CC}}{|t-s|^{\alpha}},\label{Holder_norm}\\
\Vert\mathbf{x}\Vert_{p-\text{var};[0,T]}  &  :=\left(  \sup_{D=(t_{j})}%
\sum_{j:t_{j}\in D}\Vert\mathbf{x}_{t_{j},t_{j+1}}\Vert_{CC}^{p}\right)
^{1/p}, \label{norm}%
\end{align}

where, in the latter, the supremum runs over all partitions $D$ of the
interval $[0,T]$.

\begin{definition}
\label{weakly geometric paths} For $p\geq1$ we let
\[
WG\Omega_{p}\left(  V\right)  :=WG\Omega_{p}\left(  \left[  0,T\right]
,V\right)  :=C^{p-\text{var}}\left(  [0,T],\mathfrak{g}^{\lfloor p\rfloor
}\right)  .
\]
We call $WG\Omega_{p}\left(  V\right)  $ the set of weakly\footnote{The prefix
\textit{weakly }here is really a misnomer; what are customarily called weakly
geometric rough paths really ought to be called geometric rough paths. We
persist with it for the sake of consistency with the literature.} geometric
$p$-rough paths.
\end{definition}

\begin{remark}
Note that $C^{1/p-\text{H\"{o}l}}\left(  [0,T],\mathfrak{g}^{\lfloor p\rfloor
}\right)  \subset WG\Omega_{p}\left(  V\right)  .$
\end{remark}

The definitions (\ref{Holder_norm}) and (\ref{norm}) can be easily extended
for any compact subset $I\subset\mathbb{R}$ by simply replacing $[0,T]$ by
$I$. We will also consider the case where $I=[0,\infty),$ by which we mean the following.

\begin{definition}
For $p\geq1$ we define $C^{p-\text{var}}\left(  [0,\infty),\mathfrak{g}%
\right)  $ to be the subset of the continuous $\mathfrak{g}$-valued paths,
$C\left(  [0,\infty),\mathfrak{g}\right)  $ as follows%

\[
C^{p-\text{var}}\left(  [0,\infty),\mathfrak{g}\right)  :=\left\{
\mathbf{x\in}C\left(  [0,\infty),\mathfrak{g}\right)  :\forall T\geq0,\left.
\mathbf{x}\right\vert _{T}\in C^{p-\text{var}}\left(  [0,T],\mathfrak{g}%
\right)  \right\}
\]
where $\left.  \mathbf{x}\right\vert _{T}$ denotes the restriction of a path
$\mathbf{x}$ on $[0,\infty)$ to one on $\left[  0,T\right]  $. We define
$C^{1/p-\text{H\"{o}l}}\left(  [0,\infty),\mathfrak{g}\right)  $ similarly.
\end{definition}

We will later need the fact that for $\mathbf{x}\in C^{p-\text{var}}\left(
[0,T],\mathfrak{g}\right)  $ the map
\begin{equation}
\omega_{\mathbf{x}}(s,t):=\Vert\mathbf{x}\Vert_{p-\text{var};[s,t]}^{p}
\label{control}%
\end{equation}
is a \textit{control}; by this we mean it is a continuous, non-negative,
super-additive function on the simplex $\Delta_{T}:=\{\left(  s,t\right)
\in\lbrack0,T]\times\left[  0,T\right]  :0\leq s\leq t\leq T\}$ which is zero
on the diagonal (see {\cite[p.80]{FrizVictoir2010}}).

\section{Markovian rough paths\label{markov}}

Throughout the rest of this paper we will work on the finite dimensional
vector space $V=\mathbb{R}^{d}$, equipped with the Euclidean norm. We will
exploit the rich analysis of self-adjoint subelliptic differential operators,
and the corresponding probabilistic study of symmetric Markov processes. The
most prominent references for our setting include \cite{BeurlingDeny1959},
\cite{Stroock1988}, \cite{Sturm1994}, \cite{Sturm1995}, \cite{Sturm1996a},
\cite{Fukushima2010a} and \cite{Ma1992}. \newline We will work with the
Dirichlet form which for smooth compactly supported functions $f,g\in
C_{c}^{\infty}\left(  \mathfrak{g}\right)  $ is defined by
\begin{equation}
\mathcal{E}^{a}(f,g)=\sum_{i,j=1}^{d}\int_{\mathfrak{g}}a^{ij}(h)B_{i}%
f(h)B_{j}g(h)\,dm\left(  h\right)  =:\int_{\mathfrak{g}}\Gamma^{a}(f,g)\,dm.
\label{Dirichlet_form}%
\end{equation}
Here $B_{i}=B_{e_{i}}$ is the unique left-invariant vector field on $\left(
\mathfrak{g,\ast}\right)  $ associated with the $i^{\text{th}}$ standard basis
vector $e_{i}\in%
%TCIMACRO{\U{211d} }%
%BeginExpansion
\mathbb{R}
%EndExpansion
^{d}\subseteq$Lie$\left(
%TCIMACRO{\U{211d} }%
%BeginExpansion
\mathbb{R}
%EndExpansion
^{d}\right)  $; $m$ denotes the (bi-invariant) Haar measure on $\left(
\mathfrak{g,\ast}\right)  $, which coincides with the Lebesgue measure on the
vector space $\mathfrak{g}$; $a=\left(  a^{ij}\right)  _{i,j\in\left\{
1,...,d\right\}  }$ is a measurable map from $\mathfrak{g}$ \ to
$\mathcal{S}_{d}$, the space of $d\times d$ symmetric matrices$;$ and,
$\Gamma^{a}$ is the so-called carr\'{e} du champ operator. We will need to
impose the following uniform upper and lower bounds on $a.$

\begin{definition}
For $\Lambda\geq1$ we define $\Xi(\Lambda)$ to be the class of such measurable
maps $a:\mathfrak{g}$ $\rightarrow\mathcal{S}_{d}$ having the property that
\begin{equation}
\forall y\in\mathbb{R}^{d}:\Lambda^{-1}|y|^{2}\leq\sup_{x\in\mathfrak{g}}%
y^{T}a(x)y\leq\Lambda|y|^{2}. \label{ellipticity}%
\end{equation}

\end{definition}

\begin{remark}
When $a\left(  \cdot\right)  \equiv I_{d}$, the identity matrix, we write
$\mathcal{E},\Gamma$ etc. in place of $\mathcal{E}^{a},\Gamma^{a}.$
\end{remark}

\qquad We can introduce the operator $\mathcal{L}^{a}:C_{c}^{\infty}\left(
\mathfrak{g}\right)  \rightarrow L^{2}\left(  m\right)  $ by
\[
\mathcal{L}^{a}f:=\sum_{i,j=1}^{d}B_{j}\left[  a^{ij}(\cdot)B_{i}%
f(\cdot)\right]  \left(  h\right)  ,
\]
where the right-hand-side is defined in $L^{2}\left(  m\right)  $ by weak
integration. Integrating-by-parts in (\ref{Dirichlet_form}), and using the
left-invariance of the $B_{j}$ vector fields together with the
right-invariance of $m$ ensures that
\[
\mathcal{E}^{a}(f,g)=-\left\langle \mathcal{L}^{a}f,g\right\rangle
_{L^{2}\left(  m\right)  }.
\]
The operator $\mathcal{L}^{a}:C_{c}^{\infty}\left(  \mathfrak{g}\right)
\rightarrow L^{2}\left(  m\right)  $ is then symmetric, but not self-adjoint
since $\mathcal{E}^{a}\left(  f\right)  :=\mathcal{E}^{a}\left(  f,f\right)  $
is defined a priori only on $C_{c}^{\infty}\left(  \mathfrak{g}\right)
\subset L^{2}\left(  m\right)  $. Nevertheless, the (non-negative) bilinear
form $\mathcal{E}^{a}\left(  f,g\right)  $ has the property of being closable,
and is also strongly local and strongly regular in the sense of
{\cite[Corollary 4.2]{Sturm1996a}} (see also \cite{FrizVictoir2010}). \ It is
therefore possible to construct a self-adjoint extension using the classical
Friedrich's procedure (see for instance \cite{Kato1995}). The idea is first to
find a suitable domain on which to extend $\mathcal{E}^{a}$, and then out of
this constructing a domain for extending $\mathcal{L}^{a}$.

\qquad In a little more detail, we first take the completion of $C_{c}%
^{\infty}\left(  \mathfrak{g}\right)  $ with respect to the norm%
\[
\left\vert \left\vert f\right\vert \right\vert _{\mathcal{E}^{a}}:=\left(
\left\vert \left\vert f\right\vert \right\vert _{L^{2}\left(  m\right)  }%
^{2}+\mathcal{E}^{a}\left(  f\right)  \right)  ^{1/2}.
\]
This completion may then be identified with a naturally embedded Hilbert
subspace of $L^{2}\left(  m\right)  $ -- the Dirichlet domain -- which we
denote $D\left(  \mathcal{E}^{a}\right)  .$ This then allows for the
construction of a dense linear subspace $D\left(  \mathcal{L}^{a}\right)  $ of
$L^{2}\left(  m\right)  $ on which there exists a self-adjoint extension of
$\mathcal{L}^{a}$, which is customarily also denoted $\mathcal{L}^{a}$. \ With
the self-adjoint extension in hand, classical theory provides the route from
$\mathcal{L}^{a}$ to a semi-group of contractions on $L^{2}\left(  m\right)
$, $\ $which we will denote $\left(  P_{t}^{a}\right)  _{t\geq0},$ and thence
to an associated Markov process. \qquad It is often insightful to make
comparisons of the probabilistic features of different Markov processes
produced by this procedure by comparing their Dirichlet forms. To this end, we
recall the intrinsic distance associated with $\mathcal{E}^{a}$, which is
defined by
\begin{equation}
d^{a}(x,y)=\sup\left\{  f(x)-f(y):f\in\mathcal{F}_{\text{loc}}\text{ and
}f\text{ continuous},\Gamma^{a}(f,f)\leq1\right\}  , \label{in dist}%
\end{equation}
where $\mathcal{F}_{\text{loc}}:=\left\{  f\in L^{2}\left(  dm\right)
:\Gamma^{a}(f,f)\in L_{\text{loc}}^{1}(dm)\right\}  .$ The following lemma is
proved in \cite{FrizVictoir2010} based partly on results in {\cite[p.285]%
{Stroock1988}}

\begin{lemma}
\label{compare}Let $\Lambda\geq1$, then $D(\mathcal{E}^{a})=D(\mathcal{E})$
for every $a\in\Xi(\Lambda)$ and
\[
\frac{1}{\Lambda}\mathcal{E}(f)\leq\mathcal{E}^{a}(f)\leq\Lambda
\mathcal{E}(f),\text{ for all }f\in D(\mathcal{E}).
\]
The intrinsic distance $d$ associated with $\mathcal{E}$ coincides with the
Carnot Carath\'{e}odory metric on $\mathfrak{g,}$ and furthermore $d$ and
$d^{a}$ are Lipschitz equivalent
\end{lemma}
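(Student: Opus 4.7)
The form inequality and the identification of Dirichlet domains both follow from a pointwise analysis of the carr\'{e} du champ. For $f\in C_c^\infty(\mathfrak{g})$, applying the uniform ellipticity bound on $a$ with the choice of vector $(B_1f(x),\ldots,B_df(x))\in\mathbb{R}^d$ gives
\[
\Lambda^{-1}\Gamma(f,f)(x)\leq\Gamma^a(f,f)(x)\leq\Lambda\,\Gamma(f,f)(x)
\]
for $m$-a.e.\ $x\in\mathfrak{g}$; integrating against $m$ then produces the sandwich $\Lambda^{-1}\mathcal{E}(f)\leq\mathcal{E}^a(f)\leq\Lambda\,\mathcal{E}(f)$ on $C_c^\infty(\mathfrak{g})$. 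Since the norms $\|\cdot\|_\mathcal{E}$ and $\|\cdot\|_{\mathcal{E}^a}$ are then equivalent on this common Dirichlet core, their Friedrichs completions inside $L^2(m)$ coincide as sets, so $D(\mathcal{E}^a)=D(\mathcal{E})$, and the same inequalities pass to the whole Dirichlet domain by continuity of each form with respect to its own graph norm.

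The identification of the intrinsic distance $d$ with the Carnot--Carath\'{e}odory metric $d_{CC}$ is the technical crux, and is the statement I would borrow from \cite{Stroock1988} and \cite{FrizVictoir2010} rather than redevelop. The two inequalities are proved separately: the bound $d(x,y)\leq d_{CC}(x,y)$ comes from testing the supremum in \eqref{in dist} with the horizontally $1$-Lipschitz function $f(\cdot)=d_{CC}(x,\cdot)$ (suitably truncated so that it lies in $\mathcal{F}_{\text{loc}}$ and satisfies $\Gamma(f,f)\leq 1$), while the reverse comes from evaluating any admissible $f$ along a horizontal curve $\gamma$ joining $x$ to $y$, estimating $|f(y)-f(x)|\leq\int_0^T|\nabla_H f(\gamma_s)||\dot\gamma_s|\,ds$ and infimising over $\gamma$; Chow's theorem guarantees that enough such curves exist to make the infimum equal to $d_{CC}(x,y)$.

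Granted this identification, the Lipschitz equivalence of $d$ and $d^a$ is a direct corollary of the pointwise bound on the carr\'{e} du champ: if $\Gamma^a(f,f)\leq 1$ then $\Gamma(f,f)\leq\Lambda$, so $g:=\Lambda^{-1/2}f$ is admissible in \eqref{in dist} for $a\equiv I_d$, giving $d^a(x,y)\leq\Lambda^{1/2}\,d(x,y)$; symmetrically the lower bound on $a$ gives $d(x,y)\leq\Lambda^{1/2}\,d^a(x,y)$. The main obstacle in the whole proof is therefore the sub-Riemannian identification $d=d_{CC}$, which rests on Chow's theorem and the horizontal Lipschitz property of $d_{CC}$; once this is in hand, the form comparison and the equivalence of intrinsic metrics both reduce to routine manipulations of the ellipticity hypothesis.
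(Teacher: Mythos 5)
The paper does not prove this lemma itself; it defers to \cite{FrizVictoir2010} and \cite[p.285]{Stroock1988}. Your sketch reproduces the standard argument found there, and the form-comparison and Lipschitz-equivalence parts are correct: the pointwise sandwich $\Lambda^{-1}\Gamma(f,f)\leq\Gamma^a(f,f)\leq\Lambda\,\Gamma(f,f)$ follows from applying \eqref{ellipticity} to the vector $y=(B_1f(x),\dots,B_df(x))$, integration over $m$ gives the form inequality on the core $C_c^\infty(\mathfrak{g})$, equivalence of the graph norms makes the Friedrichs completions coincide, and the metric comparison $d\asymp d^a$ is a one-line consequence of the pointwise carr\'e-du-champ bound exactly as you state.

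However, in the identification $d=d_{CC}$ you have the two directions interchanged. Plugging a particular admissible function into the supremum in \eqref{in dist} produces a \emph{lower} bound, not an upper bound: choosing $f(\cdot)=d_{CC}(y,\cdot)$ (centred at $y$, not $x$, so that $f(x)-f(y)=d_{CC}(x,y)$), and using that this function is horizontally $1$-Lipschitz so $\Gamma(f,f)\leq 1$ a.e., yields $d(x,y)\geq d_{CC}(x,y)$. Conversely, integrating the horizontal gradient of an \emph{arbitrary} admissible $f$ along a horizontal curve from $y$ to $x$ and infimising over such curves --- Chow's theorem supplies their existence --- gives $|f(x)-f(y)|\leq d_{CC}(x,y)$ for every admissible $f$, hence $d(x,y)\leq d_{CC}(x,y)$. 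The ingredients you list are the right ones, but each is attached to the wrong inequality; swapping the two labels (and recentring the test function at $y$) repairs the argument.
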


This semi-group $\left(  P_{t}^{a}\right)  _{t\geq0}$ referred to above is
easily seen by Sobolev estimates (see, e.g., \cite{Davies1989}) to admit a
kernel representation of the form
\[
(P_{t}^{a}f)\left(  x\right)  =\int f(y)p^{a}(t,x,y)\,dy.
\]
The heat kernel $p^{a}$ can be shown to satisfy the following Aronson-type
estimate (see {\cite[Corollary 4.2]{Sturm1996a}}).

\begin{theorem}
Let $a\in\Xi(\Lambda)$. The heat kernel $p^{a}$ associated with the Dirichlet
form $\mathcal{E}^{a}$ satisfies, for $\epsilon>0$ fixed,
\begin{equation}
p^{a}(t,x,y)\leq\frac{C}{\sqrt{t^{\dim_{H}\mathfrak{g}}}}\exp\left(
-\frac{d^{a}(x,y)^{2}}{(4+\epsilon)t}\right)  .
\end{equation}
for some constant $C=C(\epsilon,\Lambda)$. Here $\dim_{H}\mathfrak{g}$ denotes
the so called homogeneous dimension of $\mathfrak{g}$.
\end{theorem}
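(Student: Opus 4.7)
The plan is to follow the classical strategy of Davies, combining an on-diagonal (Nash-type) upper bound with an exponential perturbation argument that brings in the intrinsic metric $d^{a}$. The two key ingredients are (i) a scaling-invariant ultracontractive estimate of the form $\|P^{a}_{t}\|_{L^{1}\to L^{\infty}}\leq Ct^{-\dim_{H}\mathfrak{g}/2}$, and (ii) the stability of $P^{a}_{t}$ under conjugation by the exponential of a bounded Lipschitz function in the intrinsic sense. Once both are in hand, a standard optimization yields the Gaussian factor $\exp(-d^{a}(x,y)^{2}/(4+\epsilon)t)$, the loss of the sharp constant $4$ being the price paid for optimizing with a free parameter.

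For step (i), I would first establish a Nash inequality of the form
\[
\|f\|_{L^{2}(m)}^{2+4/\dim_{H}\mathfrak{g}}\leq C\,\mathcal{E}(f)\,\|f\|_{L^{1}(m)}^{4/\dim_{H}\mathfrak{g}},\qquad f\in D(\mathcal{E})\cap L^{1}(m),
\]
for the reference form $\mathcal{E}$, which is available because the step-$\lfloor p\rfloor$ free nilpotent group carries the usual family of sub-Riemannian Sobolev/Nash inequalities with exponent governed by $\dim_{H}\mathfrak{g}$ (this is where the homogeneous dimension enters, through the scaling $\delta_{r}$ acting on $\mathfrak{g}$). Lemma~\ref{compare} then transfers this inequality to $\mathcal{E}^{a}$ up to a factor of $\Lambda$, and a standard Nash-type argument (differentiating $\|P^{a}_{t}f\|_{L^{2}}^{2}$ in $t$) converts it into the on-diagonal bound $p^{a}(t,x,x)\leq Ct^{-\dim_{H}\mathfrak{g}/2}$ with $C=C(\Lambda)$, hence the prefactor in the statement.

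For step (ii), I would apply Davies' exponential perturbation trick. Fix $\psi\in\mathcal{F}_{\text{loc}}\cap C_{b}$ with $\Gamma^{a}(\psi,\psi)\leq 1$, and for $\lambda>0$ consider the twisted form $\mathcal{E}^{a,\lambda}(f,g):=\mathcal{E}^{a}(e^{-\lambda\psi}f,e^{\lambda\psi}g)$; a direct carré-du-champ computation shows $\mathcal{E}^{a,\lambda}(f,f)\geq\mathcal{E}^{a}(f,f)-\lambda^{2}\|f\|_{L^{2}}^{2}$, so the twisted semigroup $P^{a,\lambda}_{t}:=e^{\lambda\psi}P^{a}_{t}e^{-\lambda\psi}$ is $L^{2}$-bounded with norm at most $e^{\lambda^{2}t}$. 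Interpolating this with the ultracontractive bound from step (i) gives $\|P^{a,\lambda}_{t}\|_{L^{1}\to L^{\infty}}\leq Ct^{-\dim_{H}\mathfrak{g}/2}e^{\lambda^{2}t}$, which in kernel form reads
\[
p^{a}(t,x,y)\leq\frac{C}{t^{\dim_{H}\mathfrak{g}/2}}\exp\!\bigl(\lambda^{2}t-\lambda(\psi(x)-\psi(y))\bigr).
\]
Taking the supremum of $\psi(x)-\psi(y)$ over admissible $\psi$ gives $d^{a}(x,y)$ by the very definition~\eqref{in dist}, and optimizing $\lambda=d^{a}(x,y)/(2t)$ produces an $\exp(-d^{a}(x,y)^{2}/(4t))$ factor; the extra $\epsilon$ absorbs the small losses incurred in tightening the constant in the ultracontractivity/interpolation step.

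The main obstacle, and the reason this is nontrivial in the sub-Riemannian setting rather than standard Davies technology, is the fact that $\mathcal{E}^{a}$ is only subelliptic and its domain is not the classical Sobolev space on $\mathbb{R}^{\dim\mathfrak{g}}$; in particular, the Nash inequality must use the scaling dimension $\dim_{H}\mathfrak{g}$, not the topological one, and the class $\mathcal{F}_{\text{loc}}$ of admissible cut-offs for Davies' trick has to be rich enough that taking the supremum recovers the full intrinsic metric. Both points are handled by the strong locality and strong regularity of the form (invoked to construct the Friedrich extension above) together with Lemma~\ref{compare}, which reduces the geometry to that of $(\mathfrak{g},d_{CC})$ where all the required functional inequalities are classical.
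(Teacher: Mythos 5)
The paper does not actually prove this theorem: it is quoted verbatim from Sturm {\cite[Corollary 4.2]{Sturm1996a}}, where it is established in the general setting of strongly local, strongly regular Dirichlet forms satisfying a scale-invariant doubling property and a Poincar\'e inequality. So there is no ``paper proof'' to compare against; you are supplying a proof where the authors supply a citation. Your reconstruction is the classical Davies--Carlen--Kusuoka--Stroock route, and it is essentially the same circle of ideas that Sturm's proof rests on (parabolic Moser/Nash iteration together with the intrinsic metric). Your carr\'e-du-champ computation for the twisted form is correct (the cross terms cancel by symmetry of $a$), and the final optimization $\lambda=d^{a}(x,y)/(2t)$ together with $\sup_{\psi}(\psi(x)-\psi(y))=d^{a}(x,y)$ (which is exactly (\ref{in dist})) produces the Gaussian exponent.

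The one place where your write-up papers over a genuine difficulty is the step you call ``interpolating this with the ultracontractive bound from step (i).'' You cannot combine the $L^{2}\to L^{2}$ bound $\|P^{a,\lambda}_{t}\|_{2\to 2}\leq e^{\lambda^{2}t}$ for the \emph{twisted} semigroup with the $L^{1}\to L^{\infty}$ bound for the \emph{untwisted} semigroup by Riesz--Thorin; they are bounds on different operators. The correct argument is to re-run the Nash iteration for $P^{a,\lambda}_{t}$ directly, using the inequality $\mathcal{E}^{a,\lambda}(f)\geq\mathcal{E}^{a}(f)-\lambda^{2}\|f\|_{2}^{2}$ (which you have), and this in turn requires controlling $\|P^{a,\lambda}_{t}\|_{L^{1}\to L^{1}}$ along the flow, because the twisted semigroup is not Markovian and does not preserve the $L^{1}$ norm. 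That $L^{1}$ control comes from duality and the symmetric version of the same perturbation estimate, $(P^{a,\lambda}_{t})^{*}=P^{a,-\lambda}_{t}$, but it needs to be said. Similarly, the $\epsilon$ in the exponent does not come for free from ``small losses'': the naive one-shot Nash iteration gives a strictly worse exponent, and to approach $4$ one must split $[0,t]$ into $n$ subintervals and iterate, with $\epsilon\to 0$ as $n\to\infty$. These are standard technical points, all present in Davies' book and in the Dirichlet-form adaptations of Sturm and Carlen--Kusuoka--Stroock, but as written your proof has a gap exactly where the difficulty is concentrated.

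Two smaller remarks. First, when stating the Nash inequality you should make clear that $\dim_{H}\mathfrak{g}$ is dictated by the anisotropic dilations $\delta_{r}$, not the topological dimension, and that the inequality holds on the full group (no compactness), which follows from global volume doubling with respect to $d_{CC}$ on the free nilpotent group. Second, the transfer from $\mathcal{E}$ to $\mathcal{E}^{a}$ via Lemma~\ref{compare} only compares Dirichlet energies; you also need that the intrinsic distance you recover in step (ii) is $d^{a}$ and not $d$, which is automatic from the definition (\ref{in dist}) since you optimize over $\psi$ with $\Gamma^{a}(\psi,\psi)\leq 1$, but is worth flagging because the paper then uses Lemma~\ref{compare} a second time (in Corollary~\ref{heatkernel_bound}) to replace $d^{a}$ by $d$ at the cost of a factor $\Lambda$ in the exponent.
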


Lemma \ref{compare} allows us to compare these transition densities for
different $a$ very effectively. In particular, it is immediate from the
Lipschitz equivalence of $d$ and $d^{a}$ that we have the following result.

\begin{corollary}
\label{heatkernel_bound} Let $a\in\Xi(\Lambda)$ and $\epsilon>0$ fixed, then
the heat kernel $p^{a}$ satisfies
\[
p^{a}(t,x,y)\leq\frac{C}{\sqrt{t^{\dim_{H}\mathfrak{g}}}}\exp\left(
-\frac{d(x,y)^{2}}{(4+\epsilon)\Lambda t}\right)  .
\]
with constant $C=C(\epsilon,\Lambda)$.
\end{corollary}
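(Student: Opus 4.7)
The plan is to derive the bound directly from the Aronson-type estimate of the preceding theorem and the comparison of intrinsic distances in Lemma \ref{compare}. Since the kernel bound already supplies the correct prefactor $C/\sqrt{t^{\dim_H \mathfrak{g}}}$ and the Gaussian exponential in $d^{a}(x,y)^{2}/(4+\epsilon)t$, all that must be done is to replace $d^{a}$ by the Carnot--Carath\'eodory distance $d$ at the cost of a constant that depends on $\Lambda$.

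First I would make the ellipticity bound quantitative at the level of the carr\'e du champ. For $f\in\mathcal{F}_{\text{loc}}$ continuous, the upper bound $y^{T}a(x)y\leq\Lambda|y|^{2}$ implies pointwise that $\Gamma^{a}(f,f)\leq\Lambda\,\Gamma(f,f)$. Consequently, if $f$ is a competitor in the variational characterisation \eqref{in dist} of $d(x,y)$ (i.e.\ $\Gamma(f,f)\leq 1$), then $f/\sqrt{\Lambda}$ is a competitor for $d^{a}(x,y)$, since $\Gamma^{a}(f/\sqrt{\Lambda},f/\sqrt{\Lambda})\leq 1$. Taking the supremum over such $f$ yields
\[
d^{a}(x,y)\;\geq\;\frac{1}{\sqrt{\Lambda}}\,d(x,y).
\]

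With this distortion in hand, I would simply substitute into the Aronson estimate from the preceding theorem. Since the exponential is monotone decreasing in the square of the distance in the numerator, the inequality $d^{a}(x,y)^{2}\geq d(x,y)^{2}/\Lambda$ gives
\[
\exp\!\left(-\frac{d^{a}(x,y)^{2}}{(4+\epsilon)t}\right)\;\leq\;\exp\!\left(-\frac{d(x,y)^{2}}{(4+\epsilon)\Lambda t}\right),
\]
and combining with the pre-factor produces the claim with $C=C(\epsilon,\Lambda)$ inherited from the preceding theorem.

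There is no real obstacle here: the corollary is essentially a bookkeeping exercise converting an intrinsic-distance bound into one stated in the canonical, $a$-independent Carnot--Carath\'eodory metric. The only point requiring mild care is the direction of the inequality between $d$ and $d^{a}$, which must match the direction needed to preserve the Gaussian upper bound; the argument above confirms this by using only the upper ellipticity bound on $a$ (the lower bound would produce the reverse inequality, relevant instead for Gaussian \emph{lower} bounds on the kernel).
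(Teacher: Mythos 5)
Your proof is correct and follows essentially the same route as the paper, which derives the corollary immediately from the Aronson estimate together with the Lipschitz equivalence of $d$ and $d^{a}$ (Lemma~\ref{compare}). You simply make the relevant half of that equivalence quantitative—using the upper ellipticity bound to show $\Gamma^{a}(f,f)\leq\Lambda\,\Gamma(f,f)$ and hence $d^{a}\geq d/\sqrt{\Lambda}$—which is exactly the constant appearing in the stated exponent.
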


The heat kernel $p^{a}$ allows for a consistent family of finite-dimensional
distributions and thus determines a $\mathfrak{g}$-valued (strong) Markov
process $(\mathbf{X}_{t}^{a,x})_{t\geq0}$ with $a\in\Xi(\Lambda)$ and
$\mathbf{X}_{0}^{a,x}=x\in\mathfrak{g}$. Using Kolmogorov's criterion (see
\cite[Theorem 13]{FrizVictoir2008}) it can be shown that $\mathbf{X}^{a,x}$
has a version with continuous sample paths, i.e. $\mathbf{X}^{a,x}\in
C^{1/p-\text{H\"{o}l}}([0,\infty),\mathfrak{g})$ . In fact, much more can be
shown; the following theorem is an assembly of results from {\cite[Theorem
13]{FrizVictoir2008}} which we will need subsequently.

\begin{theorem}
\label{Holder_bound} Let $\Lambda\geq1$ and suppose $N\geq2$ is a natural
number. Assume $a\in\Xi(\Lambda)$ and $x\in\mathfrak{g=g}^{N}$. For any $p>2$
there exists a version $\mathbf{X}^{a,x}$ which takes values in
$C^{1/p-\text{H\"{o}l}}([0,\infty),\mathfrak{g}).$ In particular when
$p\in\lbrack N,N+1)$ we have that $\mathbf{X}^{a,x}$ restricted to $\left[
0,T\right]  $ is in $WG\Omega_{p}\left(  \left[  0,T\right]  ,V\right)  .$
Letting $\mathbb{P}^{a,x}$ denote the probability measure on $C([0,\infty
),\mathfrak{g})$ given by the law of $\mathbf{X}^{a,x}$. Then there exists a
finite constant $C=C(\alpha,\Lambda,T, N)$ such that
\[
\sup_{x\in\mathfrak{g}}\mathbb{P}^{a,x}\left(  \sup_{[s,t]\subseteq
\lbrack0,T]}\frac{d^{a}\left(  \mathbf{X}_{s},\mathbf{X}_{t}\right)
}{|t-s|^{\alpha}}>r\right)  \leq C\exp\left(  -\frac{r^{2}}{C}\right)  ,
\]
wherein $\mathbf{X}_{s}\left(  \mathfrak{\omega}\right)  \mathfrak{=\omega
}\left(  s\right)  $ for $s\geq0$ denotes the evaluation maps on
$C([0,\infty),\mathfrak{g}).$ Moreover $\mathbf{X}^{a,x}$ satisfies the
following weak scaling property for all $r>0$
\begin{equation}
\left(  \mathbf{X}_{t}^{a^{r},x}:t\geq0\right)  \overset{\mathcal{D}%
}{=}\left(  \delta_{r}\mathbf{X}_{tr^{-2}}^{a,\delta_{r^{-1}}(x)}%
:t\geq0\right)  , \label{scaling}%
\end{equation}
where $\delta_{r}$ denotes the natural scaling operation on $\mathfrak{g,}$
$\overset{\mathcal{D}}{=}$ denotes equality in distribution and $a^{r}%
(x):=a\left(  \delta_{r^{-1}}x\right)  \in\Xi(\Lambda)$.
\end{theorem}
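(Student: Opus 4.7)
The plan is to combine the Aronson-type heat kernel bound in Corollary \ref{heatkernel_bound} with Kolmogorov's continuity criterion adapted to metric-space-valued processes, and then to upgrade the resulting sample-path regularity into a Gaussian tail bound on the Hölder seminorm via a Garsia-Rodemich-Rumsey argument. The first step is to derive the single-increment estimate
\[
\mathbb{P}^{a,x}\bigl(d^{a}(\mathbf{X}_{s},\mathbf{X}_{t})>r\bigr)\leq C\exp\!\left(-\frac{r^{2}}{C(t-s)}\right),
\]
which is obtained by integrating the heat kernel bound over the set $\{y:d^{a}(x,y)>r\}$, using the standard volume-doubling on $(\mathfrak{g},d_{CC})$ (transferred to $d^{a}$ via the Lipschitz equivalence in Lemma \ref{compare}). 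Integrating the tail then yields the moment bounds $\mathbb{E}^{a,x}[d^{a}(\mathbf{X}_{s},\mathbf{X}_{t})^{q}]\leq C_{q}|t-s|^{q/2}$ for every $q\geq 1$, with constants uniform in $x$ thanks to the left-invariance of the upper bound.

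Next I would feed these moment bounds into Kolmogorov's criterion (in the form valid for paths with values in the metric space $(\mathfrak{g},d^{a})$) to construct a continuous modification with $\alpha$-Hölder sample paths for any $\alpha<1/2$, and in particular for $\alpha=1/p$ with $p>2$. To obtain the claimed Gaussian tail uniformly in $x$, rather than just existence of a Hölder modification, I would apply the metric-space Garsia-Rodemich-Rumsey lemma (available in \cite{FrizVictoir2010}) with Young function $\Psi(u)=\exp(u^{2}/C)-1$ and gauge $p(u)=u^{\alpha+1/q}$, and take $q$ large; this converts the exponential integrability of $d^{a}(\mathbf{X}_{s},\mathbf{X}_{t})/|t-s|^{1/2}$ at the level of two-point estimates into exponential integrability of the full Hölder seminorm, with the same quadratic exponent. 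The containment $\mathbf{X}^{a,x}|_{[0,T]}\in WG\Omega_{p}([0,T],V)$ for $p\in[N,N+1)$ is then immediate because $1/p$-Hölder regularity implies finite $p$-variation in $(\mathfrak{g}^{N},d_{CC})$, and the $p$-variation closure of the horizontal lifts of smooth paths is precisely the definition of a weakly geometric $p$-rough path.

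For the scaling identity \eqref{scaling}, I would use that $\delta_{r}$ is a Carnot group automorphism with $B_{i}(f\circ\delta_{r})=r\,(B_{i}f)\circ\delta_{r}$, from which a direct substitution in \eqref{Dirichlet_form} shows that the quadratic form $\mathcal{E}^{a^{r}}$ is the pullback of $\mathcal{E}^{a}$ under $\delta_{r^{-1}}$ up to a factor of $r^{2}$; this passes through the Friedrichs extension, the semigroup $(P_{t}^{a})$, and the finite-dimensional distributions, and so produces the time rescaling $t\mapsto tr^{-2}$ together with the spatial rescaling by $\delta_{r}$. Note that $a^{r}\in\Xi(\Lambda)$ with the same $\Lambda$, so all bounds above transfer. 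The main obstacle will be the uniformity in $x$ of the Gaussian tail on the Hölder norm: the Carnot group $\mathfrak{g}$ is non-compact, so the GRR step has to be run in a manner that exploits the left-invariance of the heat kernel upper bound (and of $d^{a}$ up to the Lipschitz comparison) to absorb the starting point, rather than by compactness; care is also needed to track how the constant $C$ depends on $\alpha,\Lambda,T,N$ as $\alpha\uparrow 1/2$.
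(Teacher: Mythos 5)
The paper does not prove this theorem: it imports it wholesale as ``an assembly of results from \cite[Theorem 13]{FrizVictoir2008}'', so there is no in-text argument to compare against. Your reconstruction is, however, essentially the route that Friz--Victoir themselves take to establish these facts, and the structure is sound: the Aronson-type heat kernel upper bound in Corollary \ref{heatkernel_bound} integrated over $\{y : d^{a}(x,y)>r\}$ does give a two-point tail $\mathbb{P}^{a,x}(d^{a}(\mathbf{X}_{s},\mathbf{X}_{t})>r)\leq C\exp(-r^{2}/(C|t-s|))$ with constants uniform in $x$; feeding this into the metric-space Garsia--Rodemich--Rumsey lemma with an exponential-square Young function and invoking Chebyshev on the resulting double-integral functional is exactly how one converts pointwise exponential integrability of normalized increments into a Gaussian tail on the full $\alpha$-H\"older seminorm; and for the scaling identity, the substitution $h=\delta_{r}k$ together with $B_{i}(f\circ\delta_{r})=r\,(B_{i}f)\circ\delta_{r}$ gives $\mathcal{E}^{a^{r}}(f)=r^{\dim_{H}\mathfrak{g}-2}\,\mathcal{E}^{a}(f\circ\delta_{r})$, which passes cleanly through the Friedrichs extension and the semigroup to yield \eqref{scaling} at the level of finite-dimensional distributions.

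Two small inaccuracies worth flagging, neither fatal. First, the containment in $WG\Omega_{p}$ for $p\in[N,N+1)$ is immediate because the paper \emph{defines} $WG\Omega_{p}(V):=C^{p\text{-var}}([0,T],\mathfrak{g}^{\lfloor p\rfloor})$ and a $1/p$-H\"older $\mathfrak{g}^{N}$-valued path clearly has finite $p$-variation; describing it as the $p$-variation closure of horizontal lifts of smooth paths is the characterization of \emph{geometric} rough paths $G\Omega_{p}$, not the weakly geometric class, so that sentence should be corrected even though the conclusion stands. Second, the particular gauge $p(u)=u^{\alpha+1/q}$ you propose in the GRR step needs a bit more care: the standard argument takes $\Psi(u)=\exp(u^{2}/C)-1$ and $p(u)=u^{1/2}$, sets $V:=\int\!\!\int_{[0,T]^{2}}\Psi\bigl(d^{a}(\mathbf{X}_{s},\mathbf{X}_{t})/|t-s|^{1/2}\bigr)\,ds\,dt$, shows $\mathbb{E}^{a,x}[V]\leq CT^{2}$ uniformly in $x$ by Fubini, obtains $\|\mathbf{X}\|_{\alpha\text{-H\"ol}}\leq C_{\alpha,T}(1+\log V)^{1/2}$ for each fixed $\alpha<1/2$ from GRR, and then applies Markov's inequality to $V$. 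You should verify that your gauge actually produces a bound of that form (linear in $\sqrt{\log V}$) rather than just some modulus of continuity, since that is precisely what turns the two-point Gaussian estimate into a Gaussian tail on the H\"older norm; with the standard choice this works and the dependence of $C$ on $\alpha,\Lambda,T,N$ is transparent.
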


\section{A large deviations result\label{ld}}

Let $d_{CC}$ be the Carnot-Carath\'{e}odory metric on $\mathfrak{g}$ that was
introduced in Section \ref{rough path}. Given any $\mathbf{x}$ in $C\left(
[0,\infty),\mathfrak{g}\right)  $ and $r>0$ we can define inductively a
non-decreasing sequence $\left(  \sigma_{n}^{r}\right)  _{n=0}^{\infty
}=\left(  \sigma_{n}^{r}\left(  \mathbf{x}\right)  \right)  _{n=0}^{\infty}$
by setting $\sigma_{0}^{r}=0,$ and then for $n\in%
%TCIMACRO{\U{2115} }%
%BeginExpansion
\mathbb{N}
%EndExpansion
$
\begin{equation}
\sigma_{n}^{r}:=\inf\left\{  t\geq\sigma_{n-1}^{r}:d_{CC}\left(
\mathbf{x}_{\sigma_{n-1}},\mathbf{x}_{t}\right)  \geq r\right\}  . \label{st}%
\end{equation}

\begin{definition}
\label{stopping_times} For any $T\geq0$ we define the functional $N_{0}%
^{r}(\mathbf{\cdot})=N_{0}^{r}\left(  \mathbf{\cdot},[0,T]\right)  :C\left(
[0,\infty),\mathfrak{g}\right)  \rightarrow%
%TCIMACRO{\U{2115} }%
%BeginExpansion
\mathbb{N}
%EndExpansion
\cup\left\{  0\right\}  $ by
\[
N_{0}^{r}(\mathbf{x,}[0,T]):=\sup\left\{  n:\sigma_{n}^{r}<T\right\}  .
\]

\end{definition}

\begin{remark}
When $r=1$ we will omit the superscripts and write $\sigma_{n},N_{0}(\mathbf{\cdot})$ and so forth. Note that $N_{0}^{r}(\mathbf{x,}
[0,T])<\infty$ implies that the set
\[
\left\{  \sigma_{j}:j=0,1,...,N_{0}^{r}\left(  \mathbf{x,}[0,T]\right)
\right\}  \cup\left\{  T\right\}
\]
forms a partition of the interval $[0,T]$.
\end{remark}

It is our goal in this section to analyse the tail behaviour of the integer
valued random variables $N_{0}^{r}(\mathbf{X}^{a,x},\left[  0,T\right]  )$,
when $\mathbf{X}^{a,x}$ is a Markovian rough path of the type described in
Section\ \ref{markov}. Our approach will be motivated by the following
well-known example.

\begin{example}
[Brownian motion]Let $B=(B_{t})_{t\geq0}$ a one-dimensional standard Brownian
motion on some probability space $\left(  \Omega,\mathcal{F},\mathbb{P}%
\right)  $. In this setting, the sequence in (\ref{st}) is given by
\[
\sigma_{0}:=0,\quad\sigma_{n+1}:=\inf\left\{  t\geq\sigma_{n}:|B_{t}%
-B_{\sigma_{n}}|\geq1\right\}  .
\]
It is a classical result (see, e.g., \cite{KaratzasShreve1991}) that the
Laplace transform of $\sigma:=\sigma_{1}$ satisfies
\begin{equation}
\mathbb{E}\left[  e^{-\lambda\sigma}\right]  =\cosh\left(  \sqrt{2\lambda
}\right)  ^{-1}\leq2e^{-\sqrt{2\lambda}}. \label{closed form}%
\end{equation}
If we let $\xi_{k}:=\sigma_{k}-\sigma_{k-1}$ for $k=1,...,n$ and note that
$\left\{  \xi_{k}:k=1,...,n\right\}  $ are i.i.d. with each $\xi_{k}%
\,${\ equal in distribution to }$\,\sigma$, then using $\sum_{k=1}^{n}\xi
_{k}=\sigma_{n}$, it follows that for all $\theta>0$
\[
\mathbb{P}\left(  N_{0}(B,\left[  0,1\right]  )\geq n\right)  =\mathbb{P}%
\left(  \sigma_{n}<1\right)  \leq e^{\theta}\mathbb{E}\left[  e^{-\theta
\sigma}\right]  ^{n}\leq2^{n}e^{\theta}e^{-n\sqrt{2\theta}}.
\]
The last expression can be minimized by the choice $\theta=2^{-1}n^{2}$, which
immediately yields the estimate $\mathbb{P}(N_{0}(B,\left[  0,1\right]  )\geq
n)\leq2^{n}e^{-\frac{n^{2}}{2}}\leq c_{1}e^{-c_{2}n^{2}},$ for some $c_{1}$
and $c_{2}$ in $\left(  0,\infty\right)  $ which do not depend on $n.$
\end{example}

This example makes clear the importance of the Laplace transform when
analysing the tail behaviour of $N_{0}^{r}(\mathbf{X}^{a,x}\mathbf{,}\left[
0,T\right]  )$. What is important, as we will show, is not to have a
closed-form expression as in (\ref{closed form}), but instead to have an upper
bound controlling its asymptotic behaviour as $\lambda\rightarrow\infty.$

\subsection{\bigskip Tails for $N_{0}^{r}\left(  \mathbf{X}^{a,x}%
\mathbf{,}[0,T]\right)  $}

From now on we fix $\Lambda\geq1$, $N\geq2$ and let $\mathfrak{g=g}^{N}$. We
will adopt the notation of Theorem \ref{Holder_bound}, i.e. for $a\in
\Xi\left(  \Lambda\right)  $ and $x\in\mathfrak{g,}$ $\mathbf{X}^{a,x}\in
C([0,\infty),\mathfrak{g})$ will be the strong Markov process associated with
$\mathcal{E}^{a},$ $\mathbb{P}^{a,x}$ will be the law of on $C\left(
[0,\infty),\mathfrak{g}\right)  $ and $\mathbb{E}^{a,x}$ the corresponding
expectation. For $t>0$ we continue to denote the evaluation maps by%
\[
\mathbf{X}_{t}:C([0,\infty),\mathfrak{g})\rightarrow C([0,\infty
),\mathfrak{g}),\text{ }\mathbf{X}_{t}\left(  \mathfrak{\omega}\right)
\mathfrak{=\omega}\left(  t\right)  \text{ for }t\geq0.
\]
We introduce the following notation for the Laplace transform of
$\sigma=:\sigma_{1}\left(  \mathbf{X}\right)  $, the random variable in
(\ref{st}) with $r=1,$ under the probability measure $\mathbb{P}^{a,x}:$
\begin{equation}
M(\lambda;a,x):=\mathbb{E}^{a,x}\left[  e^{-\lambda\sigma}\right]
=\int_{C\left(  [0,\infty),\mathfrak{g}\right)  }e^{-\lambda\sigma\left(
\omega\right)  }\mathbb{P}^{a,x}\left(  d\omega\right)  , \label{laplace}%
\end{equation}
ready for stating a version of De Bruijn's exponential Tauberian theorem. This
result, in a precise way, relates the asymptotic behaviour of the log Laplace
transform, $\log M(\lambda;a,x)$ as $\lambda\rightarrow\infty,$ and the log
short-time probability $\log\mathbb{P}^{a,x}\left(  \sigma\leq t\right)  $ as
$t\rightarrow0+.$

\begin{lemma}
[Exponential Tauberian theorem]Let $c>0$. The following two statements are equivalent:

\begin{enumerate}
\item $-\log M(\lambda;a,x)\sim c\sqrt{\lambda},\text{as }\lambda
\rightarrow\infty$;

\item $-\log\mathbb{P}^{a,x}\left(  \sigma\leq t\right)  \sim\frac{c^{2}}%
{4t},\text{ as }t\rightarrow0+.$
\end{enumerate}
\end{lemma}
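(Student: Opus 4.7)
The plan is to recognise this statement as a direct instance of de Bruijn's exponential Tauberian theorem (see, e.g., Bingham, Goldie and Teugels, \emph{Regular Variation}, Theorem~4.12.9). Set $F(t) := \mathbb{P}^{a,x}(\sigma \leq t)$, which is non-decreasing with $F(0+)=0$, and observe that its Laplace--Stieltjes transform is exactly $\hat F(\lambda) = M(\lambda;a,x)$. The abstract theorem says that for $C>0$ one has $-\log F(t) \sim C/t$ as $t\to 0+$ if and only if $-\log \hat F(\lambda) \sim 2\sqrt{C\lambda}$ as $\lambda\to\infty$; the stated equivalence is the special case $C = c^{2}/4$.

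For completeness I would sketch both halves. The Abelian direction $(2) \Rightarrow (1)$ is proved by Laplace's method. Writing
\[
M(\lambda;a,x) = \int_{0}^{\infty} e^{-\lambda t}\, dF(t) = \lambda \int_{0}^{\infty} e^{-\lambda t} F(t)\, dt,
\]
and inserting the hypothesised asymptotic $F(t) \approx \exp(-c^{2}/(4t))$ for small $t$, the integrand is essentially $\exp(-\lambda t - c^{2}/(4t))$. The exponent is minimised at $t^{\star} = c/(2\sqrt{\lambda})$ with minimum value $c\sqrt{\lambda}$; the upper bound on $M$ follows by splitting at $t^{\star}$ and using the short-time hypothesis below, while the matching lower bound follows by restricting the integral to a small neighbourhood of $t^{\star}$ and evaluating the integrand there.

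For the Tauberian direction $(1) \Rightarrow (2)$, the lower bound on $-\log \mathbb{P}^{a,x}(\sigma \leq t)$ is immediate from the exponential Chebyshev inequality: for any $\lambda > 0$,
\[
\mathbb{P}^{a,x}(\sigma \leq t) = \mathbb{P}^{a,x}\bigl(e^{-\lambda \sigma} \geq e^{-\lambda t}\bigr) \leq e^{\lambda t}\, M(\lambda;a,x),
\]
so $-\log \mathbb{P}^{a,x}(\sigma \leq t) \geq c\sqrt{\lambda} - \lambda t + o(\sqrt{\lambda})$; the right-hand side is maximised at $\lambda = c^{2}/(4t^{2})$, producing the desired lower bound $c^{2}/(4t) + o(1/t)$ as $t \to 0+$.

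The matching upper bound on $-\log \mathbb{P}^{a,x}(\sigma \leq t)$, equivalently a \emph{lower} bound on the small-time probability $F(t)$, is the main obstacle: no convexity-in-$\lambda$ argument of the above type can produce a lower bound on $F$ from an estimate on $\hat F$. This is exactly the genuine Tauberian content of de Bruijn's theorem, and the standard proofs proceed either via a Karamata-type argument applied to a smoothed version of $F$, or through a careful contour inversion of $\hat F$. For this step I would simply invoke de Bruijn's theorem rather than reprove it here.
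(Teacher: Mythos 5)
Your proposal is correct and takes essentially the same route as the paper: the paper's entire proof is a citation of Theorem~4.12.9 in Bingham, Goldie and Teugels with the choices $B = c^2/4$ and $\phi(\lambda) = 1/\lambda$, which is precisely the identification you make via $F(t) = \mathbb{P}^{a,x}(\sigma\leq t)$ and $\hat F = M(\lambda;a,x)$. Your extra sketch of the Abelian/Tauberian halves is accurate but not present in the paper, and you correctly flag that the genuine Tauberian direction is the part one would not reprove from scratch.
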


\begin{proof}
This is an immediate consequence of applying Theorem 4.12.9 in
\cite{BinghamGoldieTeugels1987}, \ making the choice $B=\frac{c^{2}}{4}$ and
$\phi(\lambda)=\frac{1}{\lambda}$ in the notation of that theorem.
\end{proof}

\qquad We will not need the full strength of this equivalence. Instead, we
will need the following statement which relates the asymptotic oscillations of
the two functions. We give a short proof for completeness and refer the reader
to {\cite{BinghamGoldieTeugels1987}} for further discussion of results of this type.

\begin{lemma}
\label{Tauberian} Assume that $\Lambda\geq1,$ $a\in\Xi(\Lambda)$ and
$x\in\mathfrak{g,}$ and let $\mathbb{P}^{a,x}$ denote the law of the rough
path $\mathbf{X}^{a,x}$ on $C\left(  [0,\infty),\mathfrak{g}\right)  .$
Let\ $\sigma=\sigma_{1}$ be the stopping time defined in (\ref{st}) with
$r=1$, and suppose there exists $c>0$ for which
\begin{equation}
\limsup_{t\rightarrow0+}t\sup_{a\in\Xi(\Lambda)}\sup_{x\in\mathfrak{g}}%
\log\mathbb{P}^{a,x}(\sigma\leq t)\leq-c. \label{assumption}%
\end{equation}
Then
\[
\limsup_{\lambda\rightarrow\infty}{\lambda}^{-\frac{1}{2}}\sup_{a\in
\Xi(\Lambda)}\sup_{x\in\mathfrak{g}}\log M(\lambda;a,x)\leq-2\sqrt{c}.
\]

\end{lemma}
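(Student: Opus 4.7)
The plan is to derive the claimed bound on the Laplace transform from the short-time tail hypothesis by a direct Laplace-method argument, rather than invoking the full machinery of Bingham--Goldie--Teugels; this has the advantage that the uniformity in $a\in\Xi(\Lambda)$ and $x\in\mathfrak{g}$ is preserved automatically.

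First I would rewrite the Laplace transform as a Stieltjes-type integral of the tail. Since $\sigma\geq 0$, an integration by parts gives
\[
M(\lambda;a,x)=\int_{0}^{\infty}\lambda e^{-\lambda t}\,\mathbb{P}^{a,x}(\sigma\leq t)\,dt,
\]
with no boundary contribution (the path is continuous so $\mathbb{P}^{a,x}(\sigma=0)=0$, and $e^{-\lambda t}\mathbb{P}^{a,x}(\sigma\leq t)\to 0$ as $t\to\infty$). Fix $\varepsilon\in(0,c)$. The assumption (\ref{assumption}) yields some $t_{0}=t_{0}(\varepsilon)>0$ such that for every $t\in(0,t_{0}]$,
\[
\sup_{a\in\Xi(\Lambda)}\sup_{x\in\mathfrak{g}}\mathbb{P}^{a,x}(\sigma\leq t)\leq\exp\!\left(-\frac{c-\varepsilon}{t}\right),
\]
and this is the only probabilistic input needed.

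Next I would split the integral at $t_{0}$. Using the trivial bound $\mathbb{P}^{a,x}(\sigma\leq t)\leq 1$ on the tail,
\[
M(\lambda;a,x)\leq \int_{0}^{t_{0}}\lambda e^{-\lambda t}\exp\!\left(-\frac{c-\varepsilon}{t}\right)dt+e^{-\lambda t_{0}}.
\]
For the first integral one applies the elementary inequality $-\lambda t-(c-\varepsilon)/t\leq -2\sqrt{(c-\varepsilon)\lambda}$ (minimising over $t>0$), giving
\[
\int_{0}^{t_{0}}\lambda e^{-\lambda t}e^{-(c-\varepsilon)/t}\,dt\leq \lambda t_{0}\exp\!\bigl(-2\sqrt{(c-\varepsilon)\lambda}\bigr).
\]
Hence, uniformly in $a$ and $x$,
\[
M(\lambda;a,x)\leq \lambda t_{0}\exp\!\bigl(-2\sqrt{(c-\varepsilon)\lambda}\bigr)+e^{-\lambda t_{0}}.
\]

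For large $\lambda$ the second term is negligible: $\lambda t_{0}\gg 2\sqrt{(c-\varepsilon)\lambda}$, so $e^{-\lambda t_{0}}$ is dominated by the first term. Taking logarithms and dividing by $\sqrt{\lambda}$ produces
\[
\lambda^{-1/2}\log M(\lambda;a,x)\leq -2\sqrt{c-\varepsilon}+\lambda^{-1/2}\log(2\lambda t_{0}),
\]
so that $\limsup_{\lambda\to\infty}\lambda^{-1/2}\sup_{a,x}\log M(\lambda;a,x)\leq -2\sqrt{c-\varepsilon}$. Since $\varepsilon\in(0,c)$ was arbitrary, letting $\varepsilon\downarrow 0$ gives the desired bound $-2\sqrt{c}$.

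The steps are essentially routine; the only real subtlety is to make sure that the $\limsup$ in the hypothesis is converted into an honest upper bound holding for all sufficiently small $t$, uniformly in $a$ and $x$, which is exactly why it was stated in the form (\ref{assumption}). No significant obstacle is expected beyond keeping track of the two error terms (the $\lambda t_{0}$ prefactor and the $e^{-\lambda t_{0}}$ tail) and verifying that both are of lower order than $\exp(-2\sqrt{(c-\varepsilon)\lambda})$.
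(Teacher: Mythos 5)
Your argument is correct and reaches the stated constant $-2\sqrt{c}$, but it proceeds by a genuinely different decomposition from the paper's. You integrate by parts to write $M(\lambda;a,x)=\int_0^\infty \lambda e^{-\lambda t}\,\mathbb{P}^{a,x}(\sigma\leq t)\,dt$, substitute the uniform short-time bound $\mathbb{P}^{a,x}(\sigma\leq t)\leq e^{-(c-\varepsilon)/t}$ on $(0,t_0(\varepsilon)]$, and complete the square via $\lambda t+(c-\varepsilon)/t\geq 2\sqrt{(c-\varepsilon)\lambda}$; the piece beyond $t_0$ contributes only $e^{-\lambda t_0}$, which is of lower order. The paper, by contrast, keeps the Stieltjes form $\int e^{-\lambda t}\,d\mu_{a,x}(t)$, splits at the $\lambda$-dependent cut $\lambda/\xi$ (after the substitution $\lambda\mapsto 1/\lambda^{2}$), bounds the pieces by $\mu_{a,x}(\lambda/\xi)$ and $e^{-1/(\xi\lambda)}$, and only at the end optimizes over the auxiliary parameter $\xi$. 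Your split exploits the kernel decay $e^{-\lambda t}$ and the probability decay simultaneously on the small-$t$ region, so the exponent emerges from a single AM--GM step with no free parameter left to tune. There is also a more substantive payoff: the paper's route passes from $M\leq \mu_{a,x}(\lambda/\xi)+e^{-1/(\xi\lambda)}$ to the next display by invoking $\limsup_n n^{-1}\log(a_n+b_n)\leq\limsup_n n^{-1}\log a_n+\limsup_n n^{-1}\log b_n$ for positive sequences, which is not valid in general (the correct elementary fact has a maximum, not a sum, on the right, and used as such that scheme would only deliver $-\sqrt{c}$); your direct Laplace-method estimate sidesteps this entirely and matches the De Bruijn asymptotic quoted just before the lemma. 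The only small point worth stating explicitly in your write-up is why the boundary term in the integration by parts vanishes: $\sigma>0$ almost surely (by path continuity, or as a consequence of Lemma \ref{BoundedProba}) and $e^{-\lambda t}\mathbb{P}^{a,x}(\sigma\leq t)\to 0$ as $t\to\infty$; you noted this in passing and it is fine.
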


\begin{proof}
Set $\mathbb{P}^{a,x}(\sigma\leq t)=:\mu_{a,x}(t)$. And note that it is
sufficient to show that the assumption (\ref{assumption}) implies
\[
\limsup_{\lambda\rightarrow0+}\lambda\sup_{a\in\Xi(\lambda)}\sup
_{x\in\mathfrak{g}}\log M\left(  \frac{1}{\lambda^{2}};a,x\right)  \leq
-2\sqrt{c}.
\]
Observe that for $\xi,\lambda>0$,
\begin{align}
M\left(  \frac{1}{\lambda^{2}};a,x\right)   &  =\int_{0}^{\frac{\lambda}{\xi}%
}\exp\left(  -\frac{t}{\lambda^{2}}\right)  \,d\mu_{a,x}(t)+\int%
_{\frac{\lambda}{\xi}}^{\infty}\exp\left(  -\frac{t}{\lambda^{2}}\right)
\,d\mu_{a,x}(t)\nonumber\\
&  \leq\mu_{a,x}\left(  \frac{\lambda}{\xi}\right)  +\exp\left(  -\frac{1}%
{\xi\lambda}\right)  . \label{M bound}%
\end{align}
Next we use this bound and exploit the well-know fact that for any sequences
$\left(  a_{n}\right)  _{n=1}^{\infty}$ and $\left(  b_{n}\right)
_{n=1}^{\infty}$ of positive real numbers we have%
\[
\limsup_{n\rightarrow\infty}\frac{1}{n}\log\left(  a_{n}+b_{n}\right)
\leq\limsup_{n\rightarrow\infty}\frac{1}{n}\log a_{n}+\limsup_{n\rightarrow
\infty}\frac{1}{n}\log b_{n}.
\]
In the setting of (\ref{M bound}) this gives
\begin{align*}
\limsup_{\lambda\rightarrow0+}\lambda\sup_{a,x}\log M\left(  \frac{1}%
{\lambda^{2}};a,x\right)   &  \leq\limsup_{\lambda\rightarrow0+}\lambda
\sup_{a,x}\log\left(  \mu_{a,x}\left(  \frac{\lambda}{\xi}\right)  \right)
-\frac{1}{\xi}\\
&  \leq-\xi c-\frac{1}{\xi},
\end{align*}
where the last line uses the hypothesis (\ref{assumption}). Because the
function $(0,\infty)\ni\xi\mapsto-(\xi c+\frac{1}{\xi})$ attains its global
maximum $-2\sqrt{c}$ at $\xi^{\star}=c^{-\frac{1}{2}}$, we obtain
\[
\limsup_{\lambda\rightarrow0+}\lambda\sup_{a\in\Xi(\lambda)}\sup
_{x\in\mathfrak{g}}\log M\left(  \frac{1}{\lambda^{2}};a,x\right)  \leq
-2\sqrt{c}%
\]
which completes the proof.
\end{proof}

The following lemma will make the previous result applicable to our setting.

\begin{lemma}
\label{BoundedProba} Denote by $\mathbb{P}^{a,x}$ and $\sigma$, respectively,
the probability measure and stopping time defined in the statement of lemma
\ref{Tauberian}. There exist constants $c_{1},c_{2}\in(0,\infty)$, which
depend only on $d$, $\Lambda$ and $N,$ such that for all $t\in(0,1]$
\[
\mathbb{P}^{a,x}\left(  \sigma\leq t\right)  \leq c_{1}\exp\left(
-\frac{c_{2}}{t}\right)  .
\]

\end{lemma}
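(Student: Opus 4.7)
The plan is to reduce the small-time exit estimate to a large-ball exit estimate on the unit time interval by exploiting the weak scaling property (\ref{scaling}), and then to invoke the Gaussian H\"older tail bound of Theorem \ref{Holder_bound}. The reason one should not work directly with the H\"older estimate on $[0,t]$ is that the admissible H\"older exponents $\alpha$ are strictly smaller than $1/2$, which would only produce a bound of shape $\exp(-c/t^{2\alpha})$ with $2\alpha<1$; rescaling so that the ambient interval is $[0,1]$ trades the small time for a ball of radius $r = 1/\sqrt{t}$ and gives exactly the exponent we want.

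First, with $r := 1/\sqrt{t}\geq1$ and $a^{r}(x):=a(\delta_{r^{-1}}x)\in\Xi(\Lambda)$, I would use (\ref{scaling}) together with the homogeneity of $d_{CC}$ (which follows from $d_{CC}(\delta_{r}g,\delta_{r}h)=r\,d_{CC}(g,h)$, since $\delta_{r}$ is a Lie group automorphism of $(\mathfrak{g},\ast)$) to verify the identity
\[
\mathbb{P}^{a,z}(\sigma\leq t)\;=\;\mathbb{P}^{a^{r},\delta_{r}z}\bigl(\sigma^{r}\leq1\bigr).
\]
Since $a^{r}$ ranges over $\Xi(\Lambda)$ and $\delta_{r}z$ over $\mathfrak{g}$, it is enough to prove the uniform bound
\[
\sup_{a\in\Xi(\Lambda)}\sup_{x\in\mathfrak{g}}\mathbb{P}^{a,x}\bigl(\sigma^{r}\leq1\bigr)\;\leq\;c_{1}\exp(-c_{2}r^{2})\quad\text{for all }r\geq1.
\]

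Second, on the event $\{\sigma^{r}\leq 1\}$ there exists some $s\in[0,1]$ with $d_{CC}(\mathbf{X}_{0},\mathbf{X}_{s})\geq r$. By the Lipschitz equivalence of $d_{CC}$ and $d^{a}$ guaranteed by Lemma \ref{compare}, there exists $K=K(\Lambda)>0$ with $d^{a}(\mathbf{X}_{0},\mathbf{X}_{s})\geq r/K$. Because $|s|^{\alpha}\leq1$, this at once forces $\|\mathbf{X}\|_{\alpha\text{-H\"ol};[0,1]}\geq r/K$ for any $\alpha\in(0,1/2)$. Choosing $\alpha=1/p$ for some $p>2$ and applying Theorem \ref{Holder_bound} with $T=1$ yields
\[
\mathbb{P}^{a,x}\bigl(\sigma^{r}\leq1\bigr)\;\leq\;C\exp\!\left(-\frac{r^{2}}{CK^{2}}\right),
\]
uniformly in $a\in\Xi(\Lambda)$ and $x\in\mathfrak{g}$. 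Substituting $r^{2}=1/t$ gives the claimed estimate with constants $c_{1}=C$ and $c_{2}=(CK^{2})^{-1}$ that depend only on $d$, $\Lambda$, and $N$.

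There is no substantial obstacle to this argument beyond bookkeeping: the scaling identity is the crucial ingredient, the Lipschitz equivalence of the intrinsic and Carnot--Carath\'eodory distances gives the constant $K$, and the Gaussian H\"older tail is precisely strong enough to yield an $r^{2}$-rate after rescaling. The one point deserving care is verifying that the scaling identity applied with the appropriate choice of $(a^{r},\delta_{r}x)$ gives uniformity over all of $\Xi(\Lambda)\times\mathfrak{g}$, which it does because $a\mapsto a^{r}$ preserves the ellipticity class.
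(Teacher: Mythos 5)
Your argument is correct, but it takes a genuinely different route from the paper. The paper follows Bass and works directly on the time-$t$ window: it splits $\{\sigma\leq t\}$ according to whether $d_{CC}(\mathbf{X}_t,x)$ is above or below $1/2$, uses the Aronson-type heat kernel bound of Corollary \ref{heatkernel_bound} to control each piece, and invokes the strong Markov property at $\sigma$ to handle the event where the process exits and returns. You instead use the weak scaling property (\ref{scaling}), together with the homogeneity of $d_{CC}$ under the dilations $\delta_r$ (which holds because $\delta_r$ is an automorphism of $(\mathfrak{g},\ast)$), to turn the small time $t=r^{-2}$ into a large exit radius $r$ on the fixed interval $[0,1]$, and then read off the $\exp(-cr^2)$ rate from the uniform Gaussian H\"older tail of Theorem \ref{Holder_bound} after converting $d_{CC}$ to $d^a$ via Lemma \ref{compare}. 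Your observation that a direct H\"older estimate on $[0,t]$ only gives $\exp(-c/t^{2\alpha})$ with $2\alpha<1$, and that the rescaling is what restores the exponent $1$, is exactly right and is the key conceptual point.

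Both proofs are valid and yield constants with the claimed dependence. The trade-off: the paper's argument is lower level and closer to the heat kernel estimates that ultimately underlie everything, and it does not need the scaling property at this stage; yours is shorter and cleaner but leans on the stronger packaged input of Theorem \ref{Holder_bound}, which is itself a nontrivial Kolmogorov-type consequence of those same heat kernel bounds. In other words, you have outsourced the heat kernel analysis to the H\"older tail estimate, whereas the paper does that analysis directly. One small bookkeeping point worth stating explicitly in your write-up is that the constant $C=C(\alpha,\Lambda,T,N)$ of Theorem \ref{Holder_bound} is applied with $T=1$ and a single fixed $\alpha\in(0,1/2)$, so that after absorbing the Lipschitz constant $K(\Lambda)$ of Lemma \ref{compare} the resulting $c_1,c_2$ depend only on $d$, $\Lambda$ and $N$ as required.
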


As a consequence of this lemma we see that
\[
\limsup_{t\rightarrow0+}t\sup_{a\in\Xi(\Lambda)}\sup_{x\in\mathfrak{g}}%
\log\mathbb{P}^{a,x}\left(  \sigma\leq t\right)  =-c_{2}<0,
\]
which allows us to apply lemma \ref{Tauberian} and immediately deduce the
following corollary.

\begin{corollary}
\label{Tauberian_Remark}Let $M(\lambda;a,x)$ denote the Laplace transform
(\ref{laplace}) of the stopping time $\sigma.$ Then under the condition of
lemma \ref{BoundedProba} we have
\[
\limsup_{\lambda\rightarrow\infty}\sqrt{\lambda}^{-1}\sup_{a\in\Xi(\Lambda
)}\sup_{x\in\mathfrak{g}}\log M(\lambda;a,x)\leq-2\sqrt{c_{2}},
\]
and hence there exists a constant $\lambda_{0}\in\left(  0,\infty\right)  $
such that
\begin{equation}
\sup_{a\in\Xi(\Lambda)}\sup_{x\in\mathfrak{g}}M(\lambda;a,x)\leq\exp\left(
-\sqrt{c_{2}\lambda}\right)  \text{ for all }\lambda\geq\lambda_{0}.
\label{unif}%
\end{equation}

\end{corollary}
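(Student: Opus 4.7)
The corollary is a direct assembly of Lemma \ref{BoundedProba} and Lemma \ref{Tauberian}, followed by a routine translation from a $\limsup$ statement to a concrete inequality valid beyond some threshold $\lambda_0$. The plan has three short steps.

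First I would verify that the hypothesis \eqref{assumption} of Lemma \ref{Tauberian} is delivered by Lemma \ref{BoundedProba}. Taking logarithms in $\mathbb{P}^{a,x}(\sigma\leq t)\leq c_1\exp(-c_2/t)$ and multiplying by $t$ gives
\[
t\,\log\mathbb{P}^{a,x}(\sigma\leq t)\;\leq\;t\log c_1-c_2,
\]
and crucially this holds uniformly over $a\in\Xi(\Lambda)$ and $x\in\mathfrak{g}$ because the constants $c_1,c_2$ supplied by Lemma \ref{BoundedProba} depend only on $d,\Lambda,N$. Taking $\sup_{a,x}$ on the left preserves the inequality, and then passing to $\limsup_{t\to 0+}$ yields
\[
\limsup_{t\to 0+}t\,\sup_{a\in\Xi(\Lambda)}\sup_{x\in\mathfrak{g}}\log\mathbb{P}^{a,x}(\sigma\leq t)\;\leq\;-c_2,
\]
which is exactly hypothesis \eqref{assumption} with $c=c_2$.

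Second, I would invoke Lemma \ref{Tauberian} with $c=c_2$. Its conclusion is precisely the first display of the corollary, namely
\[
\limsup_{\lambda\to\infty}\lambda^{-1/2}\sup_{a\in\Xi(\Lambda)}\sup_{x\in\mathfrak{g}}\log M(\lambda;a,x)\;\leq\;-2\sqrt{c_2}.
\]

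Third, I would turn this $\limsup$ bound into the concrete exponential inequality. By the definition of $\limsup$, for every $\varepsilon>0$ there exists $\lambda_0=\lambda_0(\varepsilon)\in(0,\infty)$ such that for all $\lambda\geq\lambda_0$,
\[
\sup_{a,x}\log M(\lambda;a,x)\;\leq\;(-2\sqrt{c_2}+\varepsilon)\sqrt{\lambda}.
\]
Choosing $\varepsilon=\sqrt{c_2}$ gives $-2\sqrt{c_2}+\varepsilon=-\sqrt{c_2}$, so the right-hand side becomes $-\sqrt{c_2}\sqrt{\lambda}=-\sqrt{c_2\lambda}$; exponentiating produces the claimed uniform bound \eqref{unif}. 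The main obstacle here is essentially nil: all of the substantive content lives in Lemma \ref{BoundedProba} (heat kernel estimates and the strong Markov property) and Lemma \ref{Tauberian} (the exponential Tauberian argument), and the only two things that require any care are (i) checking that uniformity in $(a,x)$ is preserved at each step---which it is, because the constants in Lemma \ref{BoundedProba} are uniform and the suprema commute with the one-sided inequalities being used---and (ii) choosing $\varepsilon=\sqrt{c_2}$ in the final step, which exploits the full factor of $2$ in the Tauberian conclusion to leave a clean $\sqrt{c_2}$ of slack absorbing $\varepsilon$.
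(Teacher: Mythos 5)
Your proof is correct and follows exactly the route the paper intends: check that Lemma \ref{BoundedProba} supplies hypothesis \eqref{assumption} of Lemma \ref{Tauberian} with $c=c_2$, apply the Tauberian lemma, and unwind the $\limsup$ with the choice $\varepsilon=\sqrt{c_2}$ to absorb the slack. The paper presents all three steps as an immediate consequence without spelling them out (and in passing states the $\limsup$ in $t$ ``$=-c_2$'' when only ``$\leq -c_2$'' follows from the lemma, which is all that is actually used); your write-up merely makes the same short argument explicit and, correctly, uses only the one-sided bound.
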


\begin{proof}
[Proof of Lemma \ref{BoundedProba}]We follow {\cite[Proposition 6.5]%
{Bass1998}} where a similar upper bound is obtained in the case of uniformly
elliptic diffusions. By using the Gaussian upper estimate in Corollary
\ref{heatkernel_bound} we will adapt the proof for the class of Markovian
rough paths introduced earlier. First we note that%
\[
\mathbb{P}^{a,x}\left(  \sigma\leq t\right)  \leq\mathbb{P}^{a,x}\left(
\sigma\leq t,d_{CC}\left(  \mathbf{X}_{t},x\right)  <\frac{1}{2}\right)
+\mathbb{P}^{a,x}\left(  d_{CC}\left(  \mathbf{X}_{t},x\right)  \geq\frac
{1}{2}\right)  .
\]
For $g\in\mathfrak{g}$ and $\delta>0$ we let $B\left(  g,\delta\right)  $
denote the open $d_{CC}$-ball of radius $\delta$ centred at\ $g.$ Then using
Corollary \ref{heatkernel_bound} (with fixed $\epsilon>0$), we see that the
second term satisfies
\begin{equation}%
\begin{split}
\mathbb{P}^{a,x}\left(  d_{CC}\left(  \mathbf{X}_{t},x\right)  \geq\frac{1}%
{2}\right)   &  =\int_{B\left(  x,\frac{1}{2}\right)  ^{c}}p^{a}(t,x,y)\,dy\\
&  \leq\int_{\frac{1}{2}}^{\infty}\frac{c_{1}}{\sqrt{t^{\dim_{H}\mathfrak{g}}%
}}\exp\left(  -c_{2}\frac{r^{2}}{t}\right)  r^{\dim_{H}\mathfrak{g}-1}\,dr\\
&  =\int_{\frac{1}{2\sqrt{t}}}^{\infty}c_{1}v^{\dim_{H}\mathfrak{g}-1}%
\exp\left(  -c_{2}v^{2}\right)  \,dv\\
&  \leq c_{3}e^{-\frac{c_{4}}{t}},
\end{split}
\label{first_bound}%
\end{equation}
where the constants $c_{3}$ and $c_{4}$ depend only on $d$, $\Lambda$ and $N.$
For the first term, observe that
\[%
\begin{split}
\mathbb{P}^{a,x}\left(  \sigma_{1}\leq t,d_{CC}\left(  \mathbf{X}%
_{t},x\right)  <\frac{1}{2}\right)   &  \leq\int_{0}^{t}\mathbb{P}%
^{a,x}\left(  \sigma_{1}\in ds,d_{CC}\left(  \mathbf{X}_{t},\mathbf{X}%
_{\sigma_{1}}\right)  \geq\frac{1}{2}\right) \\
&  =\int_{0}^{t}\mathbb{E}^{a,x}\left[  1_{\left\{  \sigma_1\in ds\right\}
}\mathbb{P}^{a,\mathbf{X}_{\sigma_{1}}}\left(  d_{CC}\left(  \mathbf{X}%
_{t-\sigma_{1}},\mathbf{X}_{0}\right)  \geq\frac{1}{2}\right)  \right] \\
&  \leq\int_{0}^{t}\mathbb{E}^{a,x}\left[  1_{\left\{  \sigma_1\in ds\right\}
}\mathbb{P}^{a,\mathbf{X}_{s}}\left(  d_{CC}\left(  \mathbf{X}_{t-s}%
,\mathbf{X}_{0}\right)  \geq\frac{1}{2} \right) \right]  .
\end{split}
\]
By the same argument as in (\ref{first_bound}), we know there exist constants
$c_{5}$ and $c_{6}$ (which, again, depend only on $d$, $\Lambda$ and $N)$ such
that
\[
\sup_{r\leq t}\mathbb{P}^{a,x}\left(  d_{CC}\left(  \mathbf{X}_{r}%
,\mathbf{X}_{0}\right)  \geq\frac{1}{2}\right)  \leq c_{5}e^{-\frac{c_{6}}{t}%
}.
\]
Together these bounds imply the desired result. The statement (\ref{unif})
then follows in a straight forward way.
\end{proof}

We can now prove the needed tail estimates for the the random variables
$N_{0}^{r}\left(  \mathbf{X,}\left[  0,T\right]  \right)  $ under
$\mathbb{P}^{a,x}.$

\begin{remark}
\label{scale remark}We will use the fact that the distrubution of $\sigma
^{r}=\sigma_{1}^{r}$ under $\mathbb{P}^{a,x}$ equals the distribution of
$r^{2}\sigma=r^{2}\sigma_{1}^{1}$ under $\mathbb{P}^{a^{1/r},\delta_{1/r}x}.$
This is a consequnce of the scaling property (\ref{scaling}).
\end{remark}

\begin{proposition}
\label{Gaussian_Tail}Let $\Lambda\geq1,$ $a\in\Xi(\Lambda).$ Assume that
$\mathbf{X}^{a,x}$ is the $\mathfrak{g-}$valued Markov process, defined on
some probability space, associated with the Dirichlet form
(\ref{Dirichlet_form}). Let $\mathbb{P}^{a,x}$ be the (Borel) probability
measure on $C\left(  [0,\infty),\mathfrak{g}\right)  $ and $c_{2},\lambda
_{0}\in\left(  0,\infty\right)  $ be the constants in (\ref{unif}). For every
$r>0$ the random variable $N_{0}^{r}\left(  \cdot\mathbf{,}\left[  0,T\right]
\right)  :$ $C\left(  [0,\infty),\mathfrak{g}\right)  \rightarrow%
%TCIMACRO{\U{2115} }%
%BeginExpansion
\mathbb{N}
%EndExpansion
\cup\left\{  0\right\}  $ in Definition \ref{stopping_times} satifies
\begin{equation}
\mathbb{P}^{a,x}\left(  N_{0}^{r}\left(  \mathbf{X,}\left[  0,T\right]
\right)  \geq n\right)  \leq\exp\left(  -\frac{c_{2}n^{2}r^{2}}{4T}\right)
\label{tail1}%
\end{equation}
for all $n\geq2T\lambda_{0}^{1/2}r^{-2}c_{2}^{-1/2}.$
\end{proposition}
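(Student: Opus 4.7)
The plan is to follow the Brownian-motion template given in the excerpt: convert the event $\{N_0^r \geq n\}$ into $\{\sigma_n^r < T\}$, apply an exponential Chebyshev inequality, iterate the Laplace transform bound via the strong Markov property, and then optimize. The uniform estimate (\ref{unif}) together with the scaling relation in Remark \ref{scale remark} will do the rest.

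First I would reduce to the case $r=1$. By the scaling property (\ref{scaling}), the process $t \mapsto \delta_{1/r}\mathbf{X}_{r^2 t}^{a,x}$ has the law $\mathbb{P}^{a^{1/r}, \delta_{1/r}x}$, and crucial for us, $d_{CC}(\mathbf{X}_s, \mathbf{X}_t) = r\, d_{CC}(\delta_{1/r}\mathbf{X}_s, \delta_{1/r}\mathbf{X}_t)$. Thus
\[
\mathbb{P}^{a,x}\bigl(N_0^r(\mathbf{X}, [0,T]) \geq n\bigr) = \mathbb{P}^{a^{1/r}, \delta_{1/r}x}\bigl(N_0(\mathbf{X}, [0, T/r^2]) \geq n\bigr),
\]
and since $a^{1/r} \in \Xi(\Lambda)$ too, it suffices to prove the inequality for $r=1$ with $T$ replaced by $T/r^2$.

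Next, observe that $\{N_0(\mathbf{X}, [0,S]) \geq n\} = \{\sigma_n < S\}$ (here $S = T/r^2$). The exponential Chebyshev inequality gives, for every $\lambda > 0$,
\[
\mathbb{P}^{a,x}(\sigma_n < S) \leq e^{\lambda S}\, \mathbb{E}^{a,x}\!\left[e^{-\lambda \sigma_n}\right].
\]
To iterate, I would write $\sigma_n = \sigma_{n-1} + (\sigma_n - \sigma_{n-1})$ and condition on $\mathcal{F}_{\sigma_{n-1}}$. By the strong Markov property, conditional on $\mathcal{F}_{\sigma_{n-1}}$, the increment $\sigma_n - \sigma_{n-1}$ is distributed as $\sigma = \sigma_1$ under $\mathbb{P}^{a, \mathbf{X}_{\sigma_{n-1}}}$. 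Hence
\[
\mathbb{E}^{a,x}\!\left[e^{-\lambda \sigma_n}\right] \leq \left(\sup_{a' \in \Xi(\Lambda)}\sup_{y \in \mathfrak{g}} M(\lambda; a', y)\right)^{\!n},
\]
and an induction on $n$ makes this rigorous. Invoking Corollary \ref{Tauberian_Remark}, we have for every $\lambda \geq \lambda_0$
\[
\mathbb{P}^{a,x}(\sigma_n < S) \leq \exp\bigl(\lambda S - n\sqrt{c_2 \lambda}\bigr).
\]

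Finally, optimize the exponent in $\lambda$: with $u = \sqrt{\lambda}$ the function $Su^2 - n\sqrt{c_2}\, u$ is minimized at $u^\star = n\sqrt{c_2}/(2S)$, giving the minimal value $-n^2 c_2/(4S)$. The choice $\lambda^\star = (u^\star)^2 = n^2 c_2/(4 S^2)$ is admissible (i.e.\ $\geq \lambda_0$) precisely when $n \geq 2S\lambda_0^{1/2} c_2^{-1/2}$. Translating back through $S = T/r^2$ yields the stated threshold $n \geq 2T\lambda_0^{1/2} r^{-2} c_2^{-1/2}$ and the bound $\exp(-c_2 n^2/(4S)) = \exp(-c_2 n^2 r^2/(4T))$.

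The only delicate step is the iteration of the Laplace transform by the strong Markov property: one must be sure that the uniform-in-$(a,y)$ bound (\ref{unif}) really does control $\mathbb{E}^{a,\mathbf{X}_{\sigma_{n-1}}}[e^{-\lambda \sigma}]$ pointwise, so that the product $M(\lambda;\cdot)^n$ appears. Once this is in place, the Chebyshev-plus-optimization step is routine and both the threshold and the exponent in the conclusion fall out automatically.
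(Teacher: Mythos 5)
Your proposal is correct and follows essentially the same route as the paper: Chebyshev, strong-Markov iteration of the Laplace transform bounded uniformly via (\ref{unif}), then optimization in $\lambda$. The only (cosmetic) difference is where the scaling is invoked: you rescale at the level of the event, reducing to $r=1$ on the interval $[0,T/r^2]$, whereas the paper keeps $r$ general and applies the scaling at the level of the Laplace transform, via $M_r(\lambda;a,x)=M(\lambda r^2;a^{1/r},\delta_{1/r}x)$, which shifts the admissible range to $\lambda\geq\lambda_0 r^{-2}$; both yield the same exponent and the same threshold on $n$.
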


\begin{proof}
As previously we write $\sigma_{n}^{r}=\sum_{k=1}^{n}\xi_{k}^{r}$, where
$\xi_{k}^{r}=\sigma_{k}^{r}-\sigma_{k-1}^{r}$. We aim to estimate the
probability in (\ref{tail1}), to do so we note that for $\lambda>0$ we have%
\begin{equation}
\mathbb{P}^{a,x}\left(  N_{0}^{r}\left(  \mathbf{X,}\left[  0,T\right]
\right)  \geq n\right)  \leq e^{\lambda T}\mathbb{E}^{a,x}\left[
e^{-\lambda\sum_{k=1}^{n}\xi_{k}^{r}}\right]  . \label{cheby}%
\end{equation}
By using the scaling property in the manner of Remark \ref{scale remark} gives
that%
\[
M_{r}(\lambda;a,x):=\mathbb{E}^{a,x}\left[  e^{-\lambda\xi_{1}^{r}}\right]
=\mathbb{E}^{a,x}\left[  e^{-\lambda\sigma_{1}^{r}}\right]  =\mathbb{E}%
^{^{a^{1/r},\delta_{1/r}x}}\left[  e^{-\lambda r^{2}\sigma}\right]  =M(\lambda
r^{2};a^{1/r},\delta_{1/r}x),
\]
whereupon the inequality (\ref{unif}) in Corollary yields
\begin{equation}
\sup_{a\in\Xi(\Lambda)}\sup_{x\in\mathfrak{g}}M_{r}(\lambda;a,x)\leq
\exp\left(  -\sqrt{c_{2}\lambda}r\right)  \text{ for all }\lambda\geq
\lambda_{0}r^{-2}. \label{rmk}%
\end{equation}
Combining the Strong Markov Property at the stopping time $\sigma_{n-1}^{r}$
with an easy induction yields the estimate%
\begin{align}
\mathbb{E}^{a,x}\left[  e^{-\lambda\sum_{k=1}^{n}\xi_{k}^{r}}\right]   &
=\mathbb{E}^{a,x}\left[  e^{-\lambda\sum_{k=1}^{n-1}\xi_{k}^{r}}%
\mathbb{E}^{a,\mathbf{X}_{\sigma_{n-1}^{r}}}\left[  e^{-\lambda\sigma_{1}^{r}%
}\right]  \right] \nonumber\\
&  \leq\mathbb{E}^{a,x}\left[  e^{-\lambda\sum_{k=1}^{n-1}\xi_{k}^{r}}\right]
\sup_{a\in\Xi(\Lambda)}\sup_{x\in\mathfrak{g}}M_{r}(\lambda
;a,x)\label{Strong_Markov_Property}\\
&  \leq\sup_{a\in\Xi(\Lambda)}\sup_{x\in\mathfrak{g}}M_{r}(\lambda
;a,x)^{n}\nonumber
\end{align}
Using with (\ref{Strong_Markov_Property}) and\ (\ref{cheby}) gives, for all
$\lambda\geq\lambda_{0}r^{-2},$
\[
\mathbb{P}^{a,x}\left(  N_{0}^{r}\left(  \mathbf{X,}\left[  0,T\right]
\right)  \geq n\right)  \leq\exp\left(  \lambda T-n\sqrt{c_{2}\lambda
}r\right)  .
\]
Because the right hand side is minimized by the choice $\lambda=T^{-2}%
4^{-1}c_{2}n^{2}r^{2}$, we have that
\[
\mathbb{P}^{a,x}\left(  N_{0}(\mathbf{X,}\left[  0,T\right]  )\geq n\right)
\leq\exp\left(  -\frac{c_{2}n^{2}r^{2}}{4T}\right)  ,
\]
provided $T^{-2}4^{-1}c_{2}n^{2}r^{2}\geq\lambda_{0}r^{-2},$ i.e. if
$n\geq2T\lambda_{0}^{1/2}r^{-2}c_{2}^{-1/2}.$
\end{proof}

\section{Tail estimates for the accumulated local $p$-variation\label{main}}

The law of the sub-elliptic Markov process $\mathbb{P}^{a,x}$ constructed in
Section \ref{markov} is, for any $p>2$ and $T\geq0,$ supported in
$C^{1/p-\text{H\"{o}l}}([0,T],\mathfrak{g})\subset C^{p-\text{var}%
}([0,T],\mathfrak{g})\subset C\left(  [0,T],\mathfrak{g}\right)  .$ This
observation allows us to go beyond the analysis of the previous section and
address the tail behaviour of the\textit{\ accumulated local }$p$%
\textit{-variation.} We first recall the definition of this functional (cf.
\cite{CLL2013})

\begin{definition}
[accumulated local $p$-variation]\label{accumul}Let $p\geq1$. We define the
accumulated local $p$-variation to be the function $M\left(  \mathbf{\cdot
},[0,T]\right)  =M\left(  \mathbf{\cdot}\right)  :C^{p-\text{var}}%
([0,\infty),\mathfrak{g})\rightarrow%
%TCIMACRO{\U{211d} }%
%BeginExpansion
\mathbb{R}
%EndExpansion
_{\geq0}$ by
\[
M(\mathbf{x},[0,T]):=\sup_{\overset{D=(t_{i})}{\omega_{\mathbf{x}}%
(t_{i},t_{i+1})\leq1}}\sum_{i}\omega_{\mathbf{x}}\left(  t_{i},t_{i+1}\right)
,
\]
where $\omega_{\mathbf{x}}\left(  s,t\right)  \equiv\left\vert \left\vert
\mathbf{x}\right\vert \right\vert _{p\text{-var;}\left[  s,t\right]  }^{p}$ is
the control induced by $\mathbf{x,}$ and the supremum is taken over all
partitions $D$ of the interval $[0,T]$ such that $\omega_{\mathbf{x}},$ when
evaluated between two consecutive points in $D,$ is bounded by one.
\end{definition}

We will now show that the accumulated local $p$-variation of $\mathbf{x}$ over
$\left[  0,T\right]  $ can be bounded by the sum of $N_{0}(\mathbf{x,}\left[
0,T\right]  )$ and the accumulated $p$-variation between the times associated
with the sequence $\sigma_{i}$, $i=0,1,...,N_{0}(\mathbf{x,}\left[
0,T\right]  )$.

\begin{lemma}
\label{key est}Let $p\geq1$, assume $\mathbf{x}\in C^{p-\text{var}}%
([0,\infty),\mathfrak{g})$ and suppose $\omega_{\mathbf{x}}$ is the control
induced by $\mathbf{x.}$ Let $r>0$ and write $\left(  \sigma_{n}^{r}\left(
\mathbf{x}\right)  \right)  _{n=0}^{\infty}=\left(  \sigma_{n}^{r}\right)
_{n=0}^{\infty}$ for the sequence and $N_{0}^{r}(\mathbf{x,}\left[
0,T\right]  )$ for the non-negative integer defined by (\ref{st}) and in
Definition \ref{stopping_times}, respectively. Then we can bound
$M(\mathbf{x},[0,T])$, the accumulated local $p-$variation, using the
following estimate%
\begin{equation}
M(\mathbf{x},[0,T])\leq N_{0}^{r}(\mathbf{x,}\left[  0,T\right]  )+\sum
_{j=1}^{N_{0}^{r}(\mathbf{x,}\left[  0,T\right]  )}\omega_{\mathbf{x}}\left(
\sigma_{j-1}^{r},\sigma_{j}^{r}\right)  +\omega_{\mathbf{x}}\left(
\sigma_{N_{0}^{r}(\mathbf{x,}\left[  0,T\right]  )}^{r},T\right)  .
\label{key est1}%
\end{equation}

\end{lemma}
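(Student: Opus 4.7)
The plan is to fix an arbitrary admissible partition $D = (t_{i})_{i=0}^{k}$ of $[0,T]$ (that is, one satisfying $\omega_{\mathbf{x}}(t_{i},t_{i+1})\leq 1$ for every $i$), bound $\sum_{i}\omega_{\mathbf{x}}(t_{i},t_{i+1})$ directly by the right-hand side of (\ref{key est1}), and then take the supremum over such $D$. Set $N:=N_{0}^{r}(\mathbf{x},[0,T])$ and split $[0,T]$ into the $N+1$ cells
\[
I_{j}:=[\sigma_{j-1}^{r},\sigma_{j}^{r}],\quad j=1,\ldots,N,\qquad I_{N+1}:=[\sigma_{N}^{r},T].
\]

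Each interval $[t_{i},t_{i+1}]$ is then of exactly one of two types: \emph{(a)} it is contained in some single cell $I_{j}$, or \emph{(b)} its interior contains at least one of the points $\sigma_{1}^{r},\ldots,\sigma_{N}^{r}$. I would bound the type-(a) contribution cell by cell: the intervals of $D$ lying in a given cell $I_{j}$ form a contiguous sub-sequence of $D$ (since their union is itself an interval contained in $I_{j}$), so super-additivity of the control $\omega_{\mathbf{x}}$ immediately gives that their $\omega_{\mathbf{x}}$-sum is at most $\omega_{\mathbf{x}}(\sigma_{j-1}^{r},\sigma_{j}^{r})$, respectively $\omega_{\mathbf{x}}(\sigma_{N}^{r},T)$ for the terminal cell. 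For the type-(b) contribution, the admissibility constraint $\omega_{\mathbf{x}}(t_{i},t_{i+1})\leq 1$ bounds each summand by $1$, while a counting argument based on the pairwise disjointness of the open intervals $(t_{i},t_{i+1})$ shows that there can be at most $N$ such intervals, each having to ``use up'' at least one distinct $\sigma_{j}^{r}$ from $\{\sigma_{1}^{r},\ldots,\sigma_{N}^{r}\}$.

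Adding the two contributions then yields
\[
\sum_{i}\omega_{\mathbf{x}}(t_{i},t_{i+1})\ \leq\ N\ +\ \sum_{j=1}^{N}\omega_{\mathbf{x}}(\sigma_{j-1}^{r},\sigma_{j}^{r})\ +\ \omega_{\mathbf{x}}(\sigma_{N}^{r},T),
\]
and passing to the supremum over admissible $D$ completes the proof. There is no substantive obstacle here; the argument is a combinatorial bookkeeping in the same spirit as the estimate $M\leq 2N_{p\text{-var}}+1$ from \cite{CLL2013} alluded to in the introduction, the principal difference being that here super-additivity is used to absorb the finer intervals within each cell $I_{j}$ into the single contribution $\omega_{\mathbf{x}}(\sigma_{j-1}^{r},\sigma_{j}^{r})$, rather than discarding them via the unit bound. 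The only slightly delicate points are verifying contiguity of the type-(a) intervals within a fixed cell (which allows a clean application of super-additivity to their common endpoints) and the cardinality estimate on type-(b) intervals; both follow immediately from the ordering of $D$ and the disjointness of the cells $I_{j}$.
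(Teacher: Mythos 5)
Your argument is correct, and it uses a genuinely different combinatorial decomposition from the paper's, though both rest on exactly the same two ingredients: super-additivity of the control $\omega_{\mathbf{x}}$, and the admissibility constraint $\omega_{\mathbf{x}}(t_i,t_{i+1})\leq 1$. You classify each interval of a fixed admissible partition $D$ by whether it sits inside a single cell $I_j$ (those contributions are absorbed into $\omega_{\mathbf{x}}(\sigma_{j-1}^r,\sigma_j^r)$, resp.\ $\omega_{\mathbf{x}}(\sigma_{N}^r,T)$, via super-additivity) or whether its interior meets some boundary point $\sigma_j^r$ (each such interval contributes at most $1$, and there are at most $N=N_0^r(\mathbf{x},[0,T])$ of them because the open intervals are disjoint and each must contain a distinct $\sigma_j^r$). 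The paper instead introduces the index map $\Phi(i)=\max\{k:\sigma_k^r\leq t_i\}$, groups the partition indices into maximal blocks on which $\Phi$ is constant, and shows that each block contributes at most $\omega_{\mathbf{x}}(\sigma_k^r,\sigma_{k+1}^r)+1$, the extra $+1$ accounting for the one interval in the block whose right endpoint may spill past $\sigma_{k+1}^r$. The two accountings agree in total: the paper's ``$+1$ per block'' and your ``$+1$ per straddling interval'' both sum to at most $N_0^r$. Your version makes the provenance of the two terms on the right-hand side of (\ref{key est1}) more transparent and handles, for free, the degenerate case in which a block's final interval happens to terminate exactly at $\sigma_{k+1}^r$ (where the paper spends its $+1$ unnecessarily but harmlessly); the paper's $\Phi$-machinery is more explicit about index ranges. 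Both are correct and of essentially equal length, so this is a matter of exposition rather than substance.
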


\begin{proof}
To deal with the end point in a notationally efficient way, we redefine
$\sigma_{N_{0}^{r}(\mathbf{x,}\left[  0,T\right]  )+1}$ so that $\sigma
_{N_{0}^{r}(\mathbf{x,}\left[  0,T\right]  )+1}=T$ (Note: this is, strictly,
an abuse of notation in light of the definition in formula (\ref{st})).
Suppose then that $D=\left\{  0=t_{0}<t_{1}<...<t_{n}=T\right\}  $ is an
arbitrary partition of $\left[  0,T\right]  ,$ such that any two consecutive
points $s<t$ in $D$ satisfy $\omega_{\mathbf{x}}(s,t)\leq1$. We define the
function $\Phi:\left\{  0,1,...,n-1\right\}  \rightarrow\left\{
0,1,...,N_{0}^{r}(\mathbf{x,}\left[  0,T\right]  )\right\}  $ by
\[
\Phi\left(  i\right)  =\max\left\{  k\in%
%TCIMACRO{\U{2115} }%
%BeginExpansion
\mathbb{N}
%EndExpansion
\cup\left\{  0\right\}  :\sigma_{k}^{r}\leq t_{i}\right\}  \text{ for
}i=0,1...,n-1,
\]
and then let $A$ denote the subset
\[
A=\left\{  k<N_{0}^{r}(\mathbf{x,}\left[  0,T\right]  ):\exists i\text{ with
}\Phi\left(  i\right)  =k\right\}  \subseteq\left\{  0,1,...,N_{0}%
^{r}(\mathbf{x,}\left[  0,T\right]  )-1\right\}  .
\]
For each $k\in A$ we define
\[
m_{k}=\min\left\{  i:\Phi\left(  i\right)  =k\right\}  \text{ and }n_{k}%
=\max\left\{  i:\Phi\left(  i\right)  =k\right\}  .
\]

whereupon it is an easy consequence of the definitions of $m_{k}$ and $n_{k}$
that we have $\sigma_{k}^{r}\leq t_{m_{k}}<t_{m_{k}+1}<...<t_{n_{k}}%
<\sigma_{k+1}^{r},$ and hence
\[
\sum_{j=m_{k}}^{n_{k}}\omega_{\mathbf{x}}(t_{j},t_{j+1})\leq\omega
_{\mathbf{x}}(\sigma_{k}^{r},\sigma_{k+1}^{r})+1,\text{ if }k=0,1,...,N_{0}%
^{r}(\mathbf{x,}\left[  0,T\right]  )-1.
\]

To finish we note that
\begin{align*}
\sum_{i=1}^{n}\omega_{\mathbf{x}}\left(  t_{i-1},t_{i}\right)   &  \leq
\sum_{k\in A}\sum_{j=m_{k}}^{n_{k}}\omega_{\mathbf{x}}(t_{j},t_{j+1}%
)+\omega_{\mathbf{x}}(\sigma_{N_{0}^{r}(\mathbf{x,}\left[  0,T\right]  )}%
^{r},\sigma_{T}^{r})\\
&  \leq\sum_{k\in A}\left[  \omega_{\mathbf{x}}(\sigma_{k}^{r},\sigma
_{k+1}^{r})+1\right]  +\omega_{\mathbf{x}}(\sigma_{N_{0}^{r}(\mathbf{x,}%
\left[  0,T\right]  )}^{r},\sigma_{T}^{r})\\
&  \leq\sum_{k=0}^{N_{0}^{r}(\mathbf{x,}\left[  0,T\right]  )-1}\left[
\omega_{\mathbf{x}}(\sigma_{k}^{r},\sigma_{k+1}^{r})+1\right]  +\omega
_{\mathbf{x}}(\sigma_{N_{0}^{r}(\mathbf{x,}\left[  0,T\right]  )}^{r}%
,\sigma_{T}^{r})\\
&  \leq\sum_{k=0}^{N_{0}^{r}(\mathbf{x,}\left[  0,T\right]  )}\omega
_{\mathbf{x}}(\sigma_{k}^{r},\sigma_{k+1}^{r})+N_{0}^{r}(\mathbf{x,}\left[
0,T\right]  ),
\end{align*}

and since the right hand side of the previous estimate no longer depends on
$D,$ we can take the supremum over all $D$ satisfying the constraint in
Definition \ref{accumul}. The conclusion (\ref{key est1}) then follows immediately.
\end{proof}

We are now ready to prove the main result.

\begin{theorem}
\label{main thm}Let $\Lambda\geq1,$ $a\in\Xi(\Lambda).$ Assume that
$\mathbf{X}^{a,x}$ is the $\mathfrak{g-}$valued Markov process, defined on
some probability space, associated with the Dirichlet form
(\ref{Dirichlet_form}). Let $\mathbb{P}^{a,x}$ be the (Borel) probability
measure on $C\left(  [0,\infty),\mathfrak{g}\right)  $ under which the
coordinate process $\mathbf{X}$ has the same law as $\mathbf{X}^{a,x}$, and
let $c_{2},\lambda_{0}\in\left(  0,\infty\right)  $ be the constants in
(\ref{unif}). Assume $p>2,$ and write $M(\mathbf{\cdot},[0,T])$ for the
accumulated local $p-$variation given in Definition \ref{accumul}. Since
$\mathbb{P}^{a,x}$ is supported in $C^{1/p-\text{H\"{o}l}}([0,\infty
),\mathfrak{g}),$ $M(\mathbf{X},[0,T])$ is defined $\mathbb{P}^{a,x}$- almost
surely. Moreover there exist finite $C_1,C_2,C_3>0$, which depend only on $\Lambda$,
$p$, $N$ and $T,$ such that for fixed $r>0$
\begin{equation}
\mathbb{P}^{a,x}\left(  M\left(  \mathbf{X};[0,T]\right)  >R\right)  \leq
\exp(-C_1r^2R^2)+Rr^{-p}C_2\exp(-C_3 Rr^{2-p})\label{main tail}
\end{equation}
for all $R\geq (16\lambda_0c_2^{-1})^{1/2}r^{-2}$. In particular, choosing $r:=R^{-1/p}$ in (\ref{main tail}), yields a better-than-exponential tail for the accumulated local $p$-variation functional, i.e., there exists a finite $C>0$, which depends only on $\Lambda$,
$p$, $N$ and $T,$ such that 
\begin{equation}
\mathbb{P}^{a,x}\left(  M\left(  \mathbf{X};[0,T]\right)  >R\right)  \leq
C\exp\left(  -CR^{2(1-1/p)}\right)  \label{main tail 2}%
\end{equation}
for all $R\geq(16\lambda_{0}c_{2}^{-1})^{p\left(  2p-4\right)  ^{-1}}$.
\end{theorem}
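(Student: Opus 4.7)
The plan is to combine the deterministic decomposition in Lemma~\ref{key est} with the Gaussian tail bound for $N_{0}^{r}$ in Proposition~\ref{Gaussian_Tail} and a scaling-plus-strong-Markov argument controlling the sum of local $p$-variations. Concretely, Lemma~\ref{key est} yields
\[
M(\mathbf{X},[0,T])\leq N_{0}^{r}+\sum_{j=1}^{N_{0}^{r}}\omega_{\mathbf{X}}(\sigma_{j-1}^{r},\sigma_{j}^{r})+\omega_{\mathbf{X}}(\sigma_{N_{0}^{r}}^{r},T),
\]
so I would split $\{M>R\}$ into the two events that $N_{0}^{r}$ exceeds $R/2$ or that the sum on the right does. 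The first is precisely the setting of Proposition~\ref{Gaussian_Tail} and delivers the first contribution $\exp(-C_{1}r^{2}R^{2})$ once $R$ lies in the admissible range stipulated there.

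The heart of the proof is the second event. On each interval $[\sigma_{j-1}^{r},\sigma_{j}^{r}]$ the path has $d_{CC}$-displacement $r$, and by the strong Markov property at $\sigma_{j-1}^{r}$ together with the scaling identity~(\ref{scaling}), the variable $\omega_{\mathbf{X}}(\sigma_{j-1}^{r},\sigma_{j}^{r})$ is distributed as $r^{p}$ times the analogous control of a rescaled Markovian rough path $\mathbf{X}^{a^{1/r},\cdot}$ observed up to its first exit from the CC-unit ball, a time bounded above by $T/r^{2}$. Combining the H\"older/$p$-variation interpolation $\|\mathbf{x}\|_{p\text{-var};[s,t]}^{p}\leq\|\mathbf{x}\|_{1/p\text{-H\"ol};[s,t]}^{p}(t-s)$ with the companion scaling of the H\"older norm (another consequence of~(\ref{scaling})) and the Gaussian tail in Theorem~\ref{Holder_bound}, each $\omega_{\mathbf{X}}(\sigma_{j-1}^{r},\sigma_{j}^{r})$ is bounded above, \emph{uniformly} in $a\in\Xi(\Lambda)$ and in the starting point, by a random variable whose exponential moments can be estimated.

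Armed with this uniform estimate, I would carry out a Chernoff argument in the style of the proof of Proposition~\ref{Gaussian_Tail}: iterating the strong Markov property at the stopping times $\sigma_{1}^{r},\sigma_{2}^{r},\dots$ factorises the moment generating function of the sum into a product bounded term-by-term, and optimising the Chernoff parameter combined with a union bound over the (integer) values of $N_{0}^{r}$ produces the desired tail $Rr^{-p}C_{2}\exp(-C_{3}Rr^{2-p})$; the polynomial prefactor $Rr^{-p}$ arises from summing over the relevant range of $N_{0}^{r}$. Together with the Proposition~\ref{Gaussian_Tail} contribution this gives the intermediate bound~(\ref{main tail}).

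Finally, the choice $r:=R^{-1/p}$ makes both exponents in~(\ref{main tail}) coincide, as a direct computation gives $r^{2}R^{2}=R^{2(1-1/p)}$ and $Rr^{2-p}=R^{2(1-1/p)}$; the lower bound on $R$ translates to $R\geq(16\lambda_{0}c_{2}^{-1})^{p/(2p-4)}$ and yields~(\ref{main tail 2}). The main obstacle I expect is precisely the step of producing a uniform-in-$r$ exponential-moment bound for each rescaled local $p$-variation: because the rescaled processes live on time intervals of length up to $T/r^{2}$, one must leverage the scaling of the H\"older norm against the $r^{p}$-prefactor to avoid picking up $T/r^{2}$-dependent blow-ups that would replace the target exponent $Rr^{2-p}$ by the substantially weaker $R^{2/p}$ one obtains from a naive application of the global H\"older bound to the sum.
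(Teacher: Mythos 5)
Your set-up is correct up to the split
\[
\mathbb{P}^{a,x}(M>R)\leq \mathbb{P}^{a,x}\Big(N_0^r>\tfrac R2\Big)+\mathbb{P}^{a,x}\Big(\textstyle\sum_{j}\omega_{\mathbf X}(\sigma_j^r,\sigma_{j+1}^r)>\tfrac R2\Big),
\]
the treatment of the first term via Proposition~\ref{Gaussian_Tail}, and the final substitution $r=R^{-1/p}$ (including the computation $r^2R^2=Rr^{2-p}=R^{2(1-1/p)}$ and the translated lower bound on $R$). The problem is the middle step. Your plan is a Chernoff/strong-Markov bound for the sum, iterated over the stopping times, which requires a uniform (in $a,x$) bound on $\mathbb{E}^{a,x}[e^{\theta\,\omega_{\mathbf X}(0,\sigma_1^r)}]$ for some $\theta>0$. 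No such bound exists. Under the scaling~(\ref{scaling}), $\omega_{\mathbf X}(0,\sigma_1^r)$ is distributed as $r^p\,\|\tilde{\mathbf X}\|_{p\text{-var};[0,\tilde\sigma_1]}^p$ for a rescaled process $\tilde{\mathbf X}$, and the H\"older/$p$-variation interpolation $\|\tilde{\mathbf X}\|_{p\text{-var};[s,t]}^p\leq\|\tilde{\mathbf X}\|_{1/p\text{-H\"ol};[s,t]}^p\,(t-s)$ together with Theorem~\ref{Holder_bound} gives at best a tail of the form $\mathbb{P}(\omega_{\mathbf X}(0,\sigma_1^r)>z)\lesssim\exp(-cz^{2/p})$. (The $r$-dependent factors cancel: $r^p\cdot(T/r^2)\cdot r^{2-p}=T$, so you cannot improve this by tuning $r$.) Because $p>2$, this tail is \emph{heavier} than exponential, so the moment generating function of $\omega_{\mathbf X}(0,\sigma_1^r)$ is infinite for every $\theta>0$. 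The Chernoff iteration you describe is therefore vacuous, and the intermediate target $Rr^{-p}C_2\exp(-C_3 Rr^{2-p})$ cannot be reached by bounding the sum term by term. You flag exactly this difficulty in your last paragraph, but you do not offer a mechanism that resolves it, and no choice of parameters inside the Chernoff scheme does.

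The paper's actual argument avoids this entirely by keeping the decomposition deterministic for as long as possible. For each $i$ one writes $\omega_{\mathbf X}(\sigma_i^r,\sigma_{i+1}^r)\leq\|\mathbf X\|_{1/p\text{-H\"ol};[\sigma_i^r,\sigma_{i+1}^r]}^p(\sigma_{i+1}^r-\sigma_i^r)$ and then splits the H\"older supremum at a time scale $h>0$: for $|t-s|>h$ the increment satisfies $\|\mathbf X_{s,t}\|_{CC}\leq 2r$ by definition of the stopping times, giving the deterministic bound $(2r)^p/h$; for $|t-s|\leq h$ the quotient is dominated by the \emph{global} short-scale H\"older constant $\sup_{|t-s|\leq h,[s,t]\subset[0,1]}\|\mathbf X_{s,t}\|_{CC}^p/|t-s|$, which is independent of $i$. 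Summing against $(\sigma_{i+1}^r-\sigma_i^r)$ and using $\sum_i(\sigma_{i+1}^r-\sigma_i^r)=1$ yields the key estimate
\[
\sum_i\omega_{\mathbf X}(\sigma_i^r,\sigma_{i+1}^r)\leq\sup_{\substack{s\neq t,\,|t-s|\leq h\\ [s,t]\subset[0,1]}}\frac{\|\mathbf X_{s,t}\|_{CC}^p}{|t-s|}+\frac{(2r)^p}{h},
\]
with no probability used so far. Choosing $h=2^{p+2}R^{-1}r^p$ kills the second summand, and the first is handled by a union bound over the $\lceil h^{-1}\rceil\leq Rr^{-p}$ overlapping blocks $[(k-1)h,(k+1)h]$, scaling each block to unit length (this produces the factor $\tilde h^{p-2}$ with $\tilde h=h^{-1/2}$) and applying the Gaussian H\"older tail of Theorem~\ref{Holder_bound}. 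That is where the prefactor $Rr^{-p}$ and the exponent $Rr^{2-p}$ actually come from; they are not the output of a Chernoff optimisation over $N_0^r$. If you want to repair your write-up, replace the Chernoff-on-the-sum step by this short-scale/long-scale splitting and telescoping; the rest of your argument then goes through as stated.
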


\begin{proof}
We will assume that $T=1$ and write $M\left(  \mathbf{X}\right)  $ and
$N_{0}^{r}\left(  \mathbf{X}\right)  $ in lieu of $M\left(  \mathbf{X;}\left[
0,T\right]  \right)  $ and $N_{0}^{r}\left(  \mathbf{X;}\left[  0,T\right]
\right)  ,$ respectively. The assumption $T=1$ involves no loss of generality
because of the scaling property (\ref{scaling}). The assertion that
$\mathbb{P}^{a,x}$ is supported in $C^{1/p-\text{H\"{o}l}}([0,\infty
),\mathfrak{g})$ is proved in \cite{FrizVictoir2010}, see also our
presentation in Theorem \ref{Holder_bound}.

\qquad\qquad We will prove the main estimate (\ref{main tail}) by using the
family of estimates in Lemma \ref{key est} for different values of $r.$ First
note that it is a straight-forward consequence of lemma \ref{key est} that we
have for any $R>0$ and $r>0$
\[
\left\{  \omega:M\left(  \mathbf{X}\left(  \omega\right)  \right)  >R\right\}
\subset\left\{  \omega:N_{0}^{r}\left(  \mathbf{X}\left(  \omega\right)
\right)  >\frac{R}{2}\right\}  \cup\left\{  \omega:\sum_{j=0}^{N_{0}%
^{r}\left(  \mathbf{X}\left(  \omega\right)  \right)  }\omega_{\mathbf{X}%
\left(  \omega\right)  }\left(  \sigma_{j}^{r},\sigma_{j+1}^{r}\right)
>\frac{R}{2}\right\}  ,
\]
where again $N_{0}^{r}\left(  \mathbf{X}\right)  $ and the sequence $\left(
\sigma_{n}^{r}\right)  _{n=0}^{N_{0}^{r}\left(  \mathbf{X}\right)  }$ are as
given as in (\ref{st}) with $T=1$, and we redefine $\sigma_{N_{0}%
^{r}(\mathbf{x})+1}$ to equal $T=1.$ A simple estimate then gives
\begin{equation}
\mathbb{P}^{a,x}\left(  M\left(  \mathbf{X}\right)  >R\right)  \leq
\mathbb{P}^{a,x}\left(  N_{0}^{r}\left(  \mathbf{X}\right)  >\frac{R}%
{2}\right)  +\mathbb{P}^{a,x}\left(  \sum_{j=0}^{N_{0}^{r}\left(
\mathbf{X}\right)  }\omega_{\mathbf{X}}\left(  \sigma_{j}^{r},\sigma_{j+1}%
^{r}\right)  >\frac{R}{2}\right)  . \label{1}%
\end{equation}
for all $R>0$ and $r>0.$ 

\qquad By Proposition \ref{Gaussian_Tail}
\[
\mathbb{P}^{a,x}\left(  N_{0}^{r}\left(  \mathbf{X}\right)  >\frac{R}%
{2}\right) \leq \exp\left( -\frac{c_2R^2r^2}{16} \right)
\]
for all $R\geq (16\lambda_0c_2^{-1})^{1/2}r^{-2}.$
. It remains to focus on the second term on the right in (\ref{1}). To this end, first note the
following elementary inequality
\[
\omega_{\mathbf{X}}\left(  \sigma_{i}^{r},\sigma_{i+1}^{r}\right)  \leq
\Vert\mathbf{X}\Vert_{1/p-\text{H\"{o}l};[\sigma_{i}^{r},\sigma_{i+1}^{r}%
]}^{p}\left(  \sigma_{i+1}^{r}-\sigma_{i}^{r}\right)  .
\]
Then for any $h>0$ we notice that%
\begin{align*}
\Vert\mathbf{X}\Vert_{1/p-\text{H\"{o}l},[\sigma_{i}^{r},\sigma_{i+1}^{r}%
]}^{p}  &  \leq\sup_{\substack{s\neq t,\left\vert t-s\right\vert \leq
h,\\\left[  s,t\right]  \subset\left[  \sigma_{i}^{r},\sigma_{i+1}^{r}\right]
}}\frac{\Vert\mathbf{X}_{s,t}\Vert_{CC}^{p}}{|t-s|}+\sup_{\substack{s\neq
t,\left\vert t-s\right\vert >h,\\\left[  s,t\right]  \subset\left[  \sigma
_{i}^{r},\sigma_{i+1}^{r}\right]  }}\frac{\Vert\mathbf{X}_{s,t}\Vert_{CC}^{p}%
}{|t-s|}\\
&  \leq\sup_{\substack{s\neq t,\left\vert t-s\right\vert \leq h,\\\left[
s,t\right]  \subset\left[  \sigma_{i}^{r},\sigma_{i+1}^{r}\right]  }%
}\frac{\Vert\mathbf{X}_{s,t}\Vert_{CC}^{p}}{|t-s|}+\frac{(2r)^{p}}{h},
\end{align*}
where the last line follows from the definition of $\sigma_{i}^{r}$ and
$\sigma_{i+1}^{r}$. Using the equality
$\sum_{i=0}^{N_{0}^{r}(\mathbf{X})}(\sigma_{i+1}^{r}-\sigma_{i}^{r})=1$, we
thus have for any $h>0$
\begin{align*}
\sum_{i=0}^{N_{0}^{r}\left(  \mathbf{X}\right)  }\omega_{\mathbf{X}}\left(
\sigma_{i}^{r},\sigma_{i+1}^{r}\right)   &  \leq\sum_{i=0}^{N_{0}^{r}\left(
\mathbf{X}\right)  }\left[  \sup_{\substack{s\neq t,\left\vert t-s\right\vert
\leq h,\\\left[  s,t\right]  \subset\left[  \sigma_{i}^{r},\sigma_{i+1}%
^{r}\right]  }}\frac{\Vert\mathbf{X}_{s,t}\Vert_{CC}^{p}}{|t-s|}\left(
\sigma_{i+1}^{r}-\sigma_{i}^{r}\right)  \right]  +\frac{2^{p}r^p}{h}\\
&  \leq\sup_{\substack{s\neq t,\left\vert t-s\right\vert \leq h,\\\left[
s,t\right]  \subset\left[  0,1\right]  }}\frac{\Vert\mathbf{X}_{s,t}\Vert
_{CC}^{p}}{|t-s|}+\frac{2^{p}r^p}{h}.
\end{align*}
Applying this estimate with the choice $h=2^{p+2}R^{-1}r^p$ we obtain
\[
\sum_{i=0}^{N_{0}\left(  \mathbf{X}\right)  }\omega_{\mathbf{X}}\left(
\sigma_{i},\sigma_{i+1}\right)  \leq\sup_{\substack{s\neq t,\left\vert
t-s\right\vert \leq h,\\\left[  s,t\right]  \subset\left[  0,1\right]  }%
}\frac{\Vert\mathbf{X}_{s,t}\Vert_{CC}^{p}}{|t-s|}+\frac{R}{4}%
\]
and consequently it suffices to bound%
\[
\mathbb{P}^{a,x}\left(  \sup_{\substack{s\neq t,\left\vert t-s\right\vert \leq
h,\\\left[  s,t\right]  \subset\left[  0,1\right]  }}\frac{\Vert
\mathbf{X}_{s,t}\Vert_{CC}^{p}}{|t-s|}\geq\frac{R}{4}\right)  .
\]
To do so, note that if the interval $[s,t]\subseteq\lbrack0,1]$ satisfies
$|t-s|<h$, it must be contained in at least one interval of the form
\[
\left[  (k-1)h,(k+1)h\right]  \quad\text{for some }k=1,...,\left\lceil
h^{-1}\right\rceil .
\]
Therefore,
\begin{equation}
\mathbb{P}^{a,x}\left(  \sup_{\substack{s\neq t,\left\vert t-s\right\vert \leq
h,\\\left[  s,t\right]  \subset\left[  0,1\right]  }}\frac{\Vert
\mathbf{X}_{s,t}\Vert_{CC}^{p}}{|t-s|}\geq\frac{R}{4}\right)  \leq\sum
_{k=1}^{\left\lceil h^{-1}\right\rceil }\mathbb{P}^{a,x}\left(  \sup
_{[s,t]\subseteq\left[  (k-1)h,(k+1)h\right]  }\frac{\Vert\mathbf{X}%
_{s,t}\Vert_{CC}^{p}}{|t-s|}\geq\frac{R}{4}\right)  . \label{sum}%
\end{equation}
We will now show that each term in this sum possesses the desired bound, i.e.,
there exists a positive constant $c>0$ such that
\begin{equation}
\mathbb{P}^{a,x}\left(  \sup_{[s,t]\subseteq\left[  (k-1)h,(k+1)h\right]
}\frac{\Vert\mathbf{X}_{s,t}\Vert_{CC}^{p}}{|t-s|}\geq\frac{R}{4}\right)  \leq
c\exp\left(  -\frac{Rr^{2-p}}{c}\right)  . \label{wts}%
\end{equation}
Because there are only $\lceil h^{-1}\rceil\leq Rr^{-p}$ terms in the sum, it
will follow that we can bound the left hand side of (\ref{sum}) by
\[
Rr^{-p}c\exp\left(  -\frac{1}{c}Rr^{2-p}\right).
\]
 To prove (\ref{wts}) we exploit the scaling property
(\ref{scaling}) with $\tilde{h}:=h^{-1/2}=2^{-1-p/2}R^{1/2}r^{-p/2}$ and the homogeneity of the
$CC$-norm to see that
\[
\left\Vert \mathbf{X}^{a,x}\right\Vert _{1/p-\text{H\"{o}l};[(k-1)h,(k+1)h]}%
^{p}\overset{\mathcal{D}}{=}\frac{1}{\tilde{h}^{p-2}}\left\Vert \mathbf{X}%
^{a^{\tilde{h}},\delta_{\tilde{h}}x}\right\Vert _{1/p-\text{H\"{o}l};[(k-1),(k+1)]}^{p}.
\]
We then conclude with
\begin{align*}
\sup_{y\in\mathfrak{g}}\,\mathbb{P}^{a,y}\left(  \Vert\mathbf{X}%
\Vert_{1/p-\text{H\"{o}l};[(k-1)h,(k+1)h]}^{p}\right.   &  \geq\left.
\frac{R}{4}\right)  =\sup_{y\in\mathfrak{g}}\,\mathbb{P}^{a^{\tilde{h}},y}\left(
\Vert\mathbf{X}\Vert_{1/p-\text{H\"{o}l};[(k-1),(k+1)]}^{p}\geq\frac{R^{p/2}r^{p-p^2/2}
}{2^{p^{2}/2}}\right) \\
&  \leq\sup_{a\in\Xi(\Lambda)}\sup_{y\in\mathfrak{g}}\,\mathbb{P}^{a,y}\left(
\Vert\mathbf{X}\Vert_{1/p-\text{H\"{o}l};[(k-1),(k+1)]}^{p}\geq\frac{R^{p/2}r^{p-p^2/2}
}{2^{p^{2}/2}}\right) \\
&  \leq c_{3}\exp\left(  -\frac{Rr^{2-p}}{c_{3}2^{p}}\right)
\end{align*}
where the last step results from applying Theorem \ref{Holder_bound}.

\end{proof}

\bibliographystyle{plain}
\bibliography{/Users/marcelogrodnik/Documents/library}

\end{document}